\newtheorem{theorem}{Theorem}[section]
\newtheorem{lemma}[theorem]{Lemma}
\newtheorem{corollary}[theorem]{Corollary}
\theoremstyle{definition}
\newtheorem{definition}[theorem]{Definition}
\theoremstyle{remark}
\newtheorem{remark}[theorem]{Remark}
\numberwithin{equation}{section}
\begin{document}

\title[Uniqueness of planar tangent maps]{Uniqueness of planar tangent maps \\ in the modified Ericksen model}

\author{Onur Alper}
\address{Department of Mathematics, Purdue University \\ 150 N. University Street, West Lafayette, IN 47907, USA}
\email{oalper@purdue.edu}

\begin{abstract}
We prove the uniqueness of homogeneous blow-up limits of maps minimizing the modified Ericksen energy for nematic liquid crystals in a planar domain.
The proof is based on the Weiss monotonicity formula, and a blow-up argument, originally due to Allard and Almgren \cite{AA} for minimal surfaces, 
and L. Simon \cite{SL} for energy-minimizing maps into analytic targets, which exploits the integrability of certain Jacobi fields.
\end{abstract}

\maketitle

\section{Introduction}

The uniqueness of homogeneous blow-up limits at zeros, critical or singular points of maps that minimize a particular energy
is an important question in geometric analysis. A map that has a unique homogeneous blow-up limit at a point admits a first-order approximation near it. 
Determining the local behavior near such a point often yields crucial information regarding the structure and stability of nodal, critical or singular set.
We prove the uniqueness of homogeneous blow-up limits at the point defects of minimizers of the modified Ericksen energy for nematic liquid crystals in a planar domain.

\subsection{The Ericksen Model}
We consider a probability distribution $\ell$ of unit vectors for the direction of a symmetric, elongated liquid crystal molecule at a given point in a spatial domain $\Omega$. Its first moment, $\left \langle l \right \rangle$, must vanish, due to the symmetry of individual molecules.
Hence, we consider its second moment, $\left \langle \ell \otimes \ell \right \rangle$, to capture the anisotropy of the liquid crystal sample in $\Omega$.
Under the uniaxial assumption that $\left \langle \ell \otimes \ell \right \rangle - \frac{1}{3} id$ has two equal eigenvalues, we have:
$$
\left \langle l \otimes l \right \rangle - \frac{1}{3} id = s \left [ (n \otimes n ) - \frac{1}{3} id \right ],
$$
where $| n | = 1$, $s = \frac{3}{2} \left \langle l \otimes n \right \rangle^2 - \frac{1}{2} \in \left [ -1/2, 1 \right ]$.
For constant $s$, the Oseen-Frank energy is defined as $\int_{\Omega} W(n) \, \mathrm{d}x$, where
\begin{equation*}
 W(n) = \kappa_1 | \mathrm{div} n |^2 + \kappa_2 | n \cdot \mathrm{curl} n |^2 + \kappa_3 | n \times \mathrm{curl} n |^2 
+ \left ( \kappa_2 + \kappa_4 \right ) \left [ \mathrm{tr} ( \nabla n )^2 - ( \mathrm{div}n )^2 \right ],
\end{equation*}
for material constants $\kappa_1$, $\kappa_2$, $\kappa_3 > 0$.

In \cite{HKL}, it was proved that the set of points in a neighborhood of which the director field $n$ fails to be continuous,
namely the {\emph{defect set}}, has Hausdorff dimension strictly less than $1$. In particular, the Oseen-Frank model does not admit
energy-minimizing configurations with line defects.
The original Ericksen model for nematic liquid crystals was proposed in \cite{Ericksen} 
as a generalization of the Oseen-Frank model with variable $s$, in order to allow enegy-minimizing configurations with line defects.
In \cite{Ericksen, Maddocks}, the Ericksen energy $\int_{\Omega} X(s,n)) \, \mathrm{d}x$ is introduced, where
$$
X(s,n) = s^2 W(n) + \kappa_5 | \nabla s|^2 + \kappa_6 | \nabla s \cdot n |^2 + \psi(s),
$$
and $\psi$ is a $C^2$-potential that satisfies the following:
\begin{enumerate}[(i)]
 \item $\lim_{s \to -1/2} \psi(s) = + \infty$,
 \item $\lim_{s \to 1} \psi(s) = + \infty$,
 \item $\psi'(0) = 0$,
 \item $\psi$ has a minimum at some $s_* \in (0,1)$.
\end{enumerate}
The motivation behind introducing $\psi$ is to confine the variable degree of of orientation $s$ to the range $(-1/2,1)$. 
See  \cite{Ericksen, Maddocks} for a discussion of the Ericksen model in full generality.
We stress that $\psi$ is a $C^2$ function of $s$, which is of lower order than the terms depending on $\nabla s$ or $\nabla n$.
As we are interested in the asymptotic behavior of energy-minimizing configurations near point defects,
we can assume $\psi$ to be the constant potential, since it is subcritical with respect to the scaling of the Ericksen energy.

Under the assumption $\kappa_1 = \kappa_2 = \kappa_3 = 1$, $\kappa_4 = \kappa_6 = 0$, and $\kappa_5 = \kappa$, 
which is often called the {\emph{one constant approximation}},
the Ericksen energy reduces to
\begin{equation}
\int_{\Omega} \left [ \kappa |\nabla s|^2 + s^2 | \nabla n |^2 + \psi(s)  \right ] \, \mathrm{d}x.
\label{Ericksen}
\end{equation}
The key result in \cite{L89, L91} is the following: There exists a map $u = \left ( u_1, u_2 \right ) \, : \, \Omega \to \mathbf{C}_\kappa$ minimizing 
$$
\int_{\Omega} \left [ | \nabla u |^2 + \psi \left ( \kappa^{-1/2} | u | \right ) \right ] \, \mathrm{d}x, 
$$
with respect to Dirichlet boundary data $g \in H^{1/2} \left ( \Omega \right )$, where
$$
\mathbf{C}_\kappa = \left \{ (z,y) \in \mathbf{R} \times \mathbf{R}^3 \, : \, z = \sqrt{\kappa-1} |y| \right \}, \quad \kappa > 1;
$$
the minimizer $u$ is locally H\"older continuous in $\Omega$; and the codimension of $u^{-1} \{ 0 \}$ is at least $2$.
(The existence and regularity results also hold for a double-cone version of $\mathbf{C}_\kappa$, corresponding to $s \in ( -1/2, 1 )$, or the case $\kappa <1$.
However, $u$ can have codimension 1 defects, also known as
{\emph{wall defects}}, in these cases, cf. \cite{AV, HL93}. So we restrict our attention to the case $\kappa > 1$, $s \geq 0$.)
Off the set $u^{-1} \{ 0 \}$, we can define the pair $s = \kappa^{-1/2} |u|$, $n = \sqrt{\kappa} |u|^{-1} u_2$, which minimizes \eqref{Ericksen}.
Moreover, the defect set, $\mathrm{sing}(n) = s^{-1} \{ 0 \} = u^{-1} \{ 0 \}$, cf. \cite[Section 3.2]{L89}.
In other words, up to a lower order perturbation term, the problem of analyzing the minimizers of \eqref{Ericksen}, where $\kappa > 1$ and $s \geq 0$, 
can be recast as the problem of analyzing energy-minimizing harmonic maps into the cone $\mathbf{C}_\kappa$. 

The estimate on the Hausdorff dimension of defects was improved in \cite{HL93} 
via a classification of planar tangent maps at isolated defects in $\mathbf{R}^2$.
In particular, it was proved that there is no nontrivial energy-minimizing map $u \, : \, \mathbf{R}^2 \to \mathbf{C}_\kappa$ such that 
$u^{-1} \{0\} \not = \emptyset$.
Consequently, when the target is $\mathbf{C}_\kappa$, $u^{-1} \{ 0 \}$ is locally discrete in $\Omega \subset \mathbf{R}^3$.
Also in \cite{HL93}, the modified Ericksen model was introduced. In this model, the target $\mathbf{C}_\kappa$ is replaced with its projectivised version
$\mathbf{D}_\kappa$. That is, for $[y] = \left \{ y, -y \right \}$, the sign equivalence class for $y \in \mathbf{R}^3$, 
$$
\mathbf{D}_\kappa = \left \{ \left ( z, [y] \right ) \, : \, (z,y) \in \mathbf{C}_\kappa \right \}, \quad \kappa > 1.
$$
While the results in \cite{L89, L91} hold for the modified Ericksen model as well,
the modified Ericksen model has the additional remarkable feature that in dimension $3$, it admits energy-minimizing configurations with line defects, cf. \cite{HL93, AHL}. 
We also refer to \cite{BZ} for a comprehensive discussion on modeling liquid crystals with line fields and its relation to understanding defects in $Q$-tensor theory. 

More recently, in \cite{AHL}, the structure of defects in energy-minimizing configurations in the modified Ericksen model was analyzed. 
Via blow-up arguments, it was proved that the defect set consists of a finite union of isolated points and H\"older continuous arcs with finitely many crossings.
In \cite{A2018}, this result was strengthened by proving that each of these arcs has finite length, and hence, admits a Lipschitz parametrization.  
It was also proved in \cite{AHL} that at all but countably many points in the defect set, the homogeneous blow-ups of energy-minimizing maps depend on two variables only.
Hence, at any such point, homogeneous blow-ups are planar tangent maps with respect to a suitable coordinate system. 
This coordinate system may a priori depend on the blow-up sequence, and its uniqueness is an interesting open problem.

Finally, we remark that the configurations \eqref{GeneralForm}, which are precisely the planar tangent maps by the classification result in \cite{HL93}, were the subject of investigation in \cite{INSZ} as well, where their stability was proved within the context of Landau-de Gennes model in $Q$-tensor theory.  

\subsection{The Main Result}
We denote a ball with center $z$ and radius $r$ in $\mathbf{R}^2$ by $B_r(z)$.
For maps $u \, : \, B_2(0) \to \mathbf{D}_\kappa$ minimizing the Dirichlet energy, we recall the notion of homogeneous blow-up limits.
\begin{definition}
We call $\phi \, : \, \mathbf{R}^2 \to \mathbf{D}_\kappa$ a homogeneous blow-up limit of $u$ at $x_0 \in \mathbf{R}^2$, if there exists a sequence of positive real numbers $\rho_i \to 0$, for which
\begin{equation*}
\phi(x) = \lim_{\rho_i \to 0}  \left [ \left \| u \left ( x_0 + \rho_i \cdot \right ) \right \|_{L^2 \left ( B_1 ( 0 ) \right )}^{-1} u \left ( x_0 + \rho_i x \right ) \right ].
\end{equation*}
\end{definition}
The mode of convergence was established in \cite{L89, L91, AHL}.
In particular, we have strong convergence in $H^1_{loc} \left ( \mathbf{R}^2 \right )$ and $C^{0,\alpha}_{loc} \left ( \mathbf{R}^2 \right )$ for some $\alpha > 0$, as well as convergence in higher-order norms away from the defect point $x_0$, due to the regularity theory.
We also recall the classification result in \cite{HL93} for homogeneous blow-up limits at the point defects of energy-minimizing maps in dimension 2:
\begin{equation}
 \phi(r, \theta ) = r^{\frac{1}{2 \sqrt{\kappa}}} h(\theta) = 
 C_\kappa r^{\frac{1}{2 \sqrt{\kappa}}} \left ( \sqrt{\kappa-1} , \left [ \Theta \left ( e^{\pm i\theta/2} , 0 \right ) \right ] \right ),
 \label{GeneralForm}
\end{equation}
where $\Theta \in SO(3)$, the rotation group in $\mathbf{R}^3$, $[y] \in \mathbf{RP}^2$ for $y \in S^2$, and $C_\kappa = \sqrt{\frac{2\sqrt{\kappa}+1}{2 \pi \kappa \sqrt{\kappa}}}$ so that
$\| \phi \|_{L^2 \left (B_1(0) \right )} = 1$.

It is not a priori clear whether there can be two different sequences of scales,  
giving rise to distinct blow-up limits $\phi$ and $\tilde{\phi}$, corresponding to distinct $\Theta$, $\tilde{\Theta} \in SO(3)$.
We will rule out this possibility as a byproduct of the following estimate.

\begin{theorem} \label{MainTheorem}
Let $u \, : \, B_2(0) \to \mathbf{D}_\kappa$ be an energy-minimizing harmonic map such that $u^{-1} \{ 0 \} = \{ 0 \}$, and $\phi$, a homogeneous blow-up limit of $u$ at $0 \in \mathbf{R}^2$.
Then there exist $r_0 > 0$, $C > 0$, $\mu \in (0,1)$ such that the estimate,
\begin{equation}
\left | u(x) - A \left (0, r_0 \right ) \phi(x) \right | \leq C A \left (0, r_0 \right ) |x|^{\frac{1}{2\sqrt{\kappa}}+\mu},
\label{GeometricDecay}
\end{equation}
holds for every $|x| \leq r_0$, where
\begin{equation*}
A  \left (0, r_0 \right ) = \left ( \frac{1}{{r_0}^{2+\frac{1}{\sqrt{\kappa}}}} \int_{B_{r_0} (0)} |u|^2 \, \mathrm{d}x \right )^{1/2}.
\end{equation*}
\end{theorem}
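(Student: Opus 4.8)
The plan is to run the cylindrical blow-up scheme of Allard--Almgren and L.~Simon, viewing the radial variable logarithmically as a ``time'' along which the rescalings of $u$ flow toward the manifold of tangent maps, and to extract a geometric rate from the integrability of the associated Jacobi fields. Throughout write $\alpha = \tfrac{1}{2\sqrt{\kappa}}$, $r = |x|$, $t = \log(1/r)$, and let $v(t,\theta)$ denote $u$ rescaled by its scale-invariant $L^2$ amplitude, so that $\|v(t,\cdot)\|_{L^2(S^1)}$ is normalized. Since the target cone $\mathbf{D}_\kappa$ is real-analytic away from its vertex and $u^{-1}\{0\} = \{0\}$, for $t$ large $v(t,\cdot)$ stays uniformly away from the vertex and solves a nonlinear elliptic system on the cylinder $\mathbf{R}\times S^1$ whose $t$-independent, correctly homogeneous solutions are exactly the links $h(\theta)$ of the tangent maps \eqref{GeneralForm}. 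The first ingredient is the Weiss monotonicity formula tuned to homogeneity $\alpha$: setting
\[
W(r) = \frac{1}{r^{2\alpha}}\int_{B_r(0)}|\nabla u|^2\,\mathrm{d}x - \frac{\alpha}{r^{2\alpha+1}}\int_{\partial B_r(0)}|u|^2\,\mathrm{d}\mathcal{H}^1,
\]
one checks that $W'(r)$ is a positive multiple of $\int_{\partial B_r(0)}\bigl|\partial_r u - \tfrac{\alpha}{r}u\bigr|^2\,\mathrm{d}\mathcal{H}^1 \ge 0$, so $W$ is monotone, $W(0^+)$ exists, and equality characterizes maps homogeneous of degree $\alpha$. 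This forces every subsequential blow-up to be a tangent map of the form \eqref{GeneralForm} and identifies $A(\rho)^2 = \rho^{-(2+2\alpha)}\int_{B_\rho(0)}|u|^2$ as the scale-invariant amplitude whose limit is $A_0$.

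Next I would linearize. Let $\mathcal{M}$ be the set of links $h$ in \eqref{GeneralForm}; since these are obtained from a fixed profile by post-composition with the rotation $\Theta$, $\mathcal{M}$ is the image of a quotient of the compact, real-analytic group $SO(3)$ and is therefore a compact real-analytic manifold of critical points of the energy restricted to $S^1$. Writing $v(t,\cdot) = h_{\Theta(t)} + w(t,\cdot)$ with $w$ orthogonal to $T_{h_{\Theta(t)}}\mathcal{M}$, the cylinder equation takes the form $\partial_t^2 w - 2\alpha\,\partial_t w + L_h w = Q(w,\partial_t w)$, where $L_h$ is the Jacobi operator (the second variation of the spherical energy at $h$) and $Q$ is a superlinear error controlled by $\|w\|$. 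The crucial structural fact, which is what makes the rate geometric rather than merely logarithmic, is the \emph{integrability of the Jacobi fields}: every element of $\ker L_h$ at the neutral homogeneity rate $\alpha$ arises as $\tfrac{d}{ds}\big|_{0}\,h_{\Theta_s}$ for a genuine one-parameter family in $\mathcal{M}$, i.e.\ is an infinitesimal rotation of the target. Granting this, $\ker L_h$ coincides with $T_h\mathcal{M}$, and the remaining spectrum of $L_h$ is bounded away from the neutral rate, producing a spectral gap $\mu > 0$.

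Finally I would convert these facts into the decay \eqref{GeometricDecay}. Integrability of the Jacobi fields yields a \L{}ojasiewicz--Simon inequality with the optimal exponent,
\[
\bigl| E(v(t,\cdot)) - E(h_\infty) \bigr|^{1/2} \le C\,\bigl\| \mathcal{N}(v(t,\cdot)) \bigr\|_{L^2(S^1)},
\]
where $E$ is the spherical energy, $h_\infty \in \mathcal{M}$, and $\mathcal{N}$ is the negative gradient governing the cylinder flow; combined with the dissipation identity furnished by the Weiss formula, $-\tfrac{d}{dt}\bigl(E(v(t,\cdot)) - E(h_\infty)\bigr) \ge c\,\|\partial_t v(t,\cdot)\|_{L^2(S^1)}^2$, this is a differential inequality of the classical Simon type. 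Integrating it gives exponential decay of the energy defect and of $\|\partial_t v(t,\cdot)\|_{L^2}$ in $t$, hence $\int_t^\infty \|\partial_t v\|_{L^2}\,\mathrm{d}s \le C e^{-\mu t}$; this simultaneously shows that $\Theta(t)$ converges to a single rotation $\Theta_0$ (uniqueness of the blow-up, and well-definedness of $\phi$), that $A(\rho) \to A_0$, and, after translating $e^{-\mu t} = |x|^{\mu}$ back and restoring the homogeneity weight $|x|^{\alpha}$, the pointwise estimate $|u(x) - A_0\phi(x)| \le C_0 |x|^{\alpha+\mu}$ via interior elliptic estimates on dyadic annuli. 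The main obstacle is the integrability/spectral-gap step of the second paragraph: one must compute $L_h$ explicitly for the profile built from $e^{i\theta/2}$ in \eqref{GeneralForm}, work in the correct antiperiodic function spaces dictated by the projectivization defining $\mathbf{D}_\kappa$, and verify that $\ker L_h$ is exhausted by the infinitesimal rotations with no spurious Jacobi fields at the rate $\alpha$. Once this eigenvalue analysis is in hand, the remaining steps are the standard Simon ODE argument together with routine control of the nonlinear error $Q$.
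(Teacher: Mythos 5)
Your proposal is correct in outline, but it takes a genuinely different route from the paper: you run the continuous-in-$t$ \L{}ojasiewicz--Simon scheme (gradient inequality with exponent $1/2$ from integrability, dissipation from the Weiss density, Simon's ODE lemma, exponential decay in $t = \log(1/r)$), whereas the paper runs the discrete Allard--Almgren iteration and never invokes a \L{}ojasiewicz-type inequality. Concretely, the paper proves a ``halving'' lemma (Lemma \ref{KeyLemma}) by contradiction and compactness: it rescales the deviation $w_i = \beta_i^{-1}\left ( u_i - A_0 r^{1/\sqrt{\kappa}} h_i \right )$, passes to a limit $w$ solving the linearized equation, expands $\tilde{w}$ in eigenfunctions of the explicit circle operator $\mathcal{L}'$, uses the Weiss-derived bound \eqref{Weiss} to kill the growing and logarithmic modes as in \eqref{GrowthControl}, and absorbs the neutral modes into a rotated profile $\hat{h}_i$ via the classical integrability of Jacobi fields along geodesics (Remark \ref{CircleMaps} and \eqref{Taylor}); it then iterates across the scales $\lambda^k$ (Lemma \ref{GeometricEstimate}), with a separate extension lemma (Lemma \ref{Extension}, again Weiss plus compactness) propagating smallness of the weighted norm from one annulus down to smaller scales --- this plays the role that forward continuation of the differential inequality plays in your scheme. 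Both routes rest on exactly the two structural inputs you isolate, the Weiss monotonicity as dissipation and the exhaustion of the neutral kernel by integrable Jacobi fields; your route buys a streamlined ODE argument that treats all neutral directions uniformly, while the paper's route is more elementary and explicit, reducing all spectral information to the solvable system \eqref{ReducedSystem} and avoiding analytic \L{}ojasiewicz theory (the paper also handles the projectivization by lifting through the double cover $\zeta(r,\theta) = (r^2, 2\theta)$ rather than by your antiperiodic function spaces --- equivalent devices). Two cautions for your version: first, the displayed dissipation inequality cannot hold for the circle energy $E$ alone, since the drift term $-2\alpha\,\partial_t w$ makes the cylinder equation a non-gradient flow; the correct Lyapunov functional is the Weiss density itself, as you in fact indicate, and relatedly your $t$-dependent $L^2$ normalization of $v$ would insert time-dependent coefficients into the equation, so it is cleaner to leave the amplitude unnormalized. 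Second, when you carry out the kernel computation, note that the dilation field $\psi = \phi$ is itself a Jacobi field at the neutral rate, since the cone $\mathbf{C}_\kappa$ is dilation-invariant and $(1+s)u$ is harmonic whenever $u$ is; so the kernel is not exhausted by infinitesimal rotations alone, and this direction must either be included in the critical manifold (harmless in your framework, as dilations are manifestly integrable) or pinned down by the amplitude normalization, which is precisely the role the fixed constant $A_0$ and the monotonicity of $A(\rho)$ play in the paper's argument.
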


\begin{remark} \label{Monotonicity}
The quantity $A(0, r)$ is monotone increasing in $r$. This follows from
the first variation formulas \eqref{Stationarity}, \eqref{Subharmonicity}, and that $N (0, r ) \geq N \left ( 0, 0^+ \right ) = 1/2\sqrt{\kappa}$, 
where $N(x,r)$ is the Almgren frequency function, originally introduced in \cite{A} and defined as
$$
N(x,r) = \frac{r \int_{B_r(x)} | \nabla u |^2 \, \mathrm{d}x}{\int_{\partial B_r(x)} |u|^2 \, \mathrm{d}S}.
$$
\end{remark}

An immediate consequence of the estimate \eqref{GeometricDecay} is the uniqueness of homogeneous blow-up limits.

\begin{corollary}
Suppose $x_0 \in u^{-1} \{0 \}$ for $u \, : \, \Omega \to \mathbf{D}_\kappa$ an energy-minimizing harmonic map, 
where $\Omega \subset \mathbf{R}^2$. 
There exists a $\phi \, : \, \mathbf{R}^2 \to \mathbf{D}_\kappa$, defined as in \eqref{GeneralForm}, which is the unique homogeneous blow-up limit of $u$ at $x_0$, and which satisfies 
\eqref{GeometricDecay}.
\end{corollary}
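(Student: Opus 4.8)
The plan is to \emph{deduce} the Corollary directly from Theorem~\ref{MainTheorem}, whose decay estimate already contains all the analytic content; the only remaining work is to reduce to the theorem's hypotheses and to upgrade the pointwise decay \eqref{GeometricDecay} into full (non-subsequential) convergence of the normalized blow-ups. First I would reduce to the origin. Since $\Omega$ is open and, in dimension two, the defect set is a locally discrete set of isolated points by the classification of \cite{HL93} and \cite{AHL}, one may choose $r>0$ so small that $u^{-1}\{0\}\cap B_{2r}(x_0)=\{x_0\}$. Setting $v(x):=u(x_0+rx)$ then defines a minimizing harmonic map $v\colon B_2^2(0)\to\mathbf D_\kappa$ with $v^{-1}\{0\}=\{0\}$, because both minimality and the cone constraint are invariant under translation and dilation of the domain. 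A direct change of variables shows that the normalized rescalings of $u$ about $x_0$ at scale $\rho$ coincide with those of $v$ about $0$ at scale $\rho/r$, so the homogeneous blow-up limits of $u$ at $x_0$ are exactly those of $v$ at $0$; it therefore suffices to prove the statement for $v$ at the origin.

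Next, by the compactness of the blow-up procedure established in \cite{L89}, \cite{L91}, \cite{AHL}, the map $v$ admits at least one homogeneous blow-up limit $\phi$ at $0$, and by the classification \eqref{GeneralForm} this $\phi$ has the asserted form. Feeding precisely this $\phi$ into Theorem~\ref{MainTheorem} yields constants $r_0,C_0>0$ and $\mu\in(0,1)$ with
\begin{equation*}
\bigl|v(x)-A_0\phi(x)\bigr|\le C_0\,|x|^{\frac{1}{2\sqrt\kappa}+\mu},\qquad |x|\le r_0 .
\end{equation*}

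Writing $v_\rho(x):=\bigl(\fint_{B_\rho}|v|^2\bigr)^{-1/2}v(\rho x)$ and using the homogeneity $\phi(\rho x)=\rho^{1/(2\sqrt\kappa)}\phi(x)$, the decay estimate gives $v(\rho x)=A_0\rho^{1/(2\sqrt\kappa)}\phi(x)+O\!\bigl(\rho^{\frac{1}{2\sqrt\kappa}+\mu}\bigr)$ locally uniformly in $x$. Inserting the same estimate into the normalizing integral shows $\fint_{B_\rho}|v|^2=A_0^2\,\rho^{1/\sqrt\kappa}\bigl(1+O(\rho^\mu)\bigr)$ up to the fixed normalization constant, so the $L^2(B_1)$-normalization factor converges and the two powers of $\rho^{1/(2\sqrt\kappa)}$ cancel; one thereby obtains $v_\rho\to\phi$ as $\rho\to0^+$ as a \emph{full} limit, not merely along a subsequence. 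Consequently, if $\tilde\phi$ is any homogeneous blow-up limit of $v$ at $0$, arising from scales $\tilde\rho_j\to0$, then $v_{\tilde\rho_j}\to\tilde\phi$ by definition while $v_{\tilde\rho_j}\to\phi$ by the above, whence $\tilde\phi=\phi$ by uniqueness of limits. Undoing the reduction then transfers this unique blow-up limit, together with \eqref{GeometricDecay}, back to $u$ at $x_0$.

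The genuinely delicate point is the passage in the last paragraph: the bookkeeping must confirm that, after renormalizing $A_0\phi$ to unit $L^2(B_1)$-norm, one recovers \emph{exactly} $\phi$, so that the limiting scalar is $1$ and no spurious multiple survives, and that the error terms in both numerator and denominator of $v_\rho$ are genuinely of lower order in $\rho$. Everything else—isolatedness of the zeros, existence of a blow-up, and scale and translation invariance of the blow-up classes—is standard or already cited, so the decay estimate of Theorem~\ref{MainTheorem} does essentially all the work.
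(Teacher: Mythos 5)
Your proposal is correct and follows essentially the same route as the paper, which derives the corollary as an immediate consequence of the decay estimate \eqref{GeometricDecay}: apply Theorem \ref{MainTheorem} to one blow-up limit, observe that the pointwise decay plus the homogeneity of $\phi$ forces full (non-subsequential) convergence of the normalized rescalings, and conclude that any subsequential limit coincides with $\phi$. Your reduction to the origin via isolation of zeros, the use of $A_0>0$ from Remark \ref{Monotonicity}, and the observation that the limiting scalar must equal $1$ (since $\phi$ itself arises as a limit of the same normalized family) are exactly the routine details the paper leaves implicit.
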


\begin{proof}
Translating our coordinate system by $x_0$, we can assume without loss of generality that $x_0 = 0$.
Choosing $r_0 = r$ sufficiently small, dividing both sides of \eqref{GeometricDecay} by 
$\left \| u \left ( r \cdot \right ) \right \|_{L^2 \left ( B_1 \left ( 0 \right ) \right )}$,
and performing the change of coordinates, $x = r y$, 
we observe that for every $r$ sufficiently small and every $y \in B_1(0)$,
\begin{equation*}
\left | \left \| u \left ( r \cdot \right ) \right \|_{L^2 \left ( B_1 \left ( 0 \right ) \right )}^{-1} 
u \left ( r y \right ) - \phi(y) \right | \leq C |y|^{\frac{1}{2\sqrt{\kappa}} + \mu} r^{\mu}.
\end{equation*}
Hence, the limit of left-hand side is $0$, as $r \downarrow 0$. In other words, the homogeneous blow-up limit of $u$ at $x_0$ is unique.
\end{proof}

\subsection{An Overview of the Proof}

Our strategy for the proof of Theorem \ref{MainTheorem} goes back to the work of Allard and Almgren \cite{AA} on the uniqueness of tangent cones at isolated singularities of minimal surfaces. 
The same idea later found applications in the context of Dirichlet energy-minimizing harmonic maps into analytic targets \cite{SL} and maps into $S^{n-1}$ minimizing the $p$-energy \cite{HLW}.
This strategy relies on the integrability of the second variation equation in order to construct suitable comparison maps,
while making use of an appropriate monotonicity formula to knock out coefficients of non-decaying terms in a certain eigenfunction expansion.
Here the Weiss monotonicity formula \eqref{Weiss0}, which was originally introduced in \cite{Weiss} in the context of free boundary problems,
is the appropriate monotonicity formula that enables us to carry out a blow-up argument as in \cite{AA, SL, HLW}.

We will prove Theorem \ref{MainTheorem} through proving a series of lemmas. But firstly, we define for $0 < \rho < \sigma \leq 1$, $u_r(x) = r^{-1/2\sqrt{\kappa}} u(x)$, 
the following weighted $C^2$-norm,
\begin{equation*}
\| u \|_{\sigma, \rho} = \sup_{\rho \leq r \leq \sigma} \left ( \left  \| u_r \right \|_{L^\infty\left ( \partial B_r(0) \right )}  
+  r \left \| \nabla u_r \right \|_{L^\infty\left ( \partial B_r(0) \right )} 
+  r^2 \left \| \nabla^2 u_r \right \|_{L^\infty\left ( \partial B_r(0) \right )} \right ).
\end{equation*}

For any $h(\theta) = C_\kappa \left ( \sqrt{\kappa - 1}, \left [ \Theta \left (e^{\pm i \theta/2}, 0 \right ) \right ] \right )$, 
$\epsilon > 0$, and $\lambda \in (0,1/2)$, we define the set $\mathcal{Q}(h,\epsilon,\lambda)$
of all minimizing harmonic maps $u \, : \, B_1(0) \to \mathbf{D}_\kappa$ such that $u^{-1} \{ 0 \} = \{ 0 \}$, and
\begin{equation*}
\left \| u(r,\theta) - A_0 r^{1/2 \sqrt{\kappa}} h(\theta) \right \|_{1, \lambda^3} \leq \epsilon,
\end{equation*}
where 
\begin{equation}
A_0 = \| u \|_{L^2 \left ( B_1(0) \right )}. \label{AChoice}
\end{equation} 

The following decay lemma is the key ingredient in proving Theorem \ref{MainTheorem}.

\begin{lemma} \label{KeyLemma}
For any positive $\lambda \leq \lambda_0$, where $\lambda_0 \in (0,1)$ is an absolute constant, 
there exists an $\epsilon > 0$, such that if $h_1 \in C^2 \left ( S^1, \mathbf{D}_\kappa \right )$
is a map defined as in \eqref{GeneralForm}, corresponding to a $\Theta_1 \in SO(3)$, satisfying
$A_0 \left \| h_1 - h \right \|_{C^2 \left ( S^1 \right ) } < \epsilon $, and $u \in \mathcal{Q} (h, \epsilon, \lambda )$, 
then there exists a $\Theta_2 \in SO(3)$ and a corresponding map $h_2 \in C^2 \left (S^1, \mathbf{D}_\kappa \right )$ such that we have:
\begin{equation}
\left \| u(r,\theta) - A_0 r^{1/2\sqrt{\kappa}} h_2(\theta) \right \|_{\lambda, \lambda^3} 
\leq \frac{1}{2} \left \| u(r,\theta) - A_0 r^{1/2\sqrt{\kappa}} h_1(\theta) \right \|_{1, \lambda^2}.
\label{Halving}
\end{equation}
\end{lemma}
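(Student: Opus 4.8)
The plan is to argue by contradiction through a blow-up of the linearization, following the scheme of \cite{AA}, \cite{SL}, and \cite{HLW}. Abbreviate $\alpha = 1/2\sqrt{\kappa}$. Fix $\lambda \leq \lambda_0$ and suppose the conclusion fails. Then there exist $\epsilon_j \downarrow 0$, maps $u_j \in \mathcal{Q}(h, \epsilon_j, \lambda)$, and profiles $h_{1,j}$ corresponding to rotations $\Theta_{1,j}$ with $A_0\|h_{1,j} - h\|_{C^2(S^1)} < \epsilon_j$, such that for \emph{every} rotation $\Theta_2$ and its profile $h_2$ the reverse inequality
\[
\|u_j - A_0 r^\alpha h_2\|_{\lambda,\lambda^3} > \tfrac{1}{2}\|u_j - A_0 r^\alpha h_{1,j}\|_{1,\lambda^2}
\]
holds. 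Setting $E_j = \|u_j - A_0 r^\alpha h_{1,j}\|_{1,\lambda^2}$, the membership $u_j \in \mathcal{Q}(h,\epsilon_j,\lambda)$ together with $h_{1,j}\to h$ forces $E_j \to 0$. I then normalize $v_j = E_j^{-1}(u_j - A_0 r^\alpha h_{1,j})$, so that $\|v_j\|_{1,\lambda^2} = 1$ by homogeneity of the weighted norm.

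Since $u_j$ minimizes the Dirichlet energy into $\mathbf{D}_\kappa$ and $A_0 r^\alpha h_{1,j}$ is the homogeneous harmonic tangent map associated with $\Theta_{1,j}$, expanding the Euler--Lagrange equation of $u_j$ about $A_0 r^\alpha h_{1,j}$ and projecting onto the tangent space of $\mathbf{D}_\kappa$ shows that $v_j$ solves the linearized (Jacobi) equation up to an error quadratically small in $E_j$, hence negligible after division by $E_j$. Interior elliptic estimates on dyadic annuli, the $C^2$ control encoded in $\|\cdot\|_{1,\lambda^2}$, and the convergence $u_j \to A_0 r^\alpha h$ then yield $v_j \to v$ in $C^2_{\mathrm{loc}}(\overline{B_1}\setminus\{0\})$, where $v$ is a Jacobi field about the tangent map $\phi = A_0 r^\alpha h$ with $\|v\|_{1,\lambda^2} \leq 1$.

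Separating variables, $v$ expands into homogeneous Jacobi fields $r^{\gamma_k}\psi_k(\theta)$, where the exponents $\gamma_k$ and profiles $\psi_k$ are governed by the eigenvalues of the Jacobi operator on the link $S^1$, which I diagonalize explicitly from the form of $h$ in \eqref{GeneralForm}. The monotonicity inherited from the Weiss formula, equivalently the frequency lower bound $N(0,\rho)\geq N(0,0^+)=\alpha$ of Remark \ref{Monotonicity}, knocks out the non-decaying terms: no mode with $\gamma_k < \alpha$ survives in the blow-up limit, as such a mode would be incompatible with the growth forced by $N \geq \alpha$. The critical exponent $\gamma = \alpha$ is realized precisely by the infinitesimal target rotations $\tfrac{d}{dt}\big|_{t=0} A_0 r^\alpha h_{\Theta(t)}$ along curves $\Theta(t)\subset SO(3)$ through $\Theta_{1,0}$; this integrability statement identifies the degree-$\alpha$ part of $v$ as tangent to the finite-dimensional manifold of planar tangent maps.

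I then choose $\Theta_2$ by composing $\Theta_{1,j}$ with the rotation generating that degree-$\alpha$ component, so that its profile $h_2$ cancels, to leading order, the non-decaying part of $v_j$. The residual $u_j - A_0 r^\alpha h_2$ carries only exponents $\gamma_k \geq \alpha + \mu$, where $\mu > 0$ is the spectral gap above $\alpha$. Since $r^{\gamma_k - \alpha}$ is increasing in $r$, the weighted norm of the residual on the inner annulus $[\lambda^3,\lambda]$ is smaller than on $[\lambda^2,1]$ by a factor $\lesssim \lambda^\mu$; fixing $\lambda_0$ so that this factor is below $\tfrac{1}{2}$ contradicts the assumed reverse inequality for the $h_2$ just constructed. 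The main obstacle is the content of the last two steps: proving the integrability of the critical Jacobi fields, namely that every degree-$\alpha$ Jacobi field arises from an honest rotation of the tangent map, and pinning down the gap $\mu > 0$. Both demand an explicit diagonalization of the Jacobi operator on $S^1$ for the profile $h$, carefully accounting for the cone constraint $z = \sqrt{\kappa-1}|y|$ and the double-valued $\mathbf{RP}^2$-lift implicit in the half-angle $e^{i\theta/2}$.
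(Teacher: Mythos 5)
Your outline reproduces the paper's overall architecture (contradiction, normalization by $E_j$, Jacobi blow-up, eigenfunction expansion, Weiss knock-out, integrability of the critical modes, spectral gap, smallness of $\lambda_0$), but there is one genuine gap, at the compactness step, and it is precisely the ingredient the paper isolates in Section \ref{ExtensionSection}. You pass from $\|v_j\|_{1,\lambda^2}=1$ to ``$v_j \to v$ in $C^2_{\mathrm{loc}}(\overline{B_1}\setminus\{0\})$'' using only interior elliptic estimates and ``the $C^2$ control encoded in $\|\cdot\|_{1,\lambda^2}$''. But that norm sees only the annulus $\lambda^2 \leq r \leq 1$, and after dividing by $E_j$ the a priori bound $\|u_j - A_0 r^{\alpha} h\|_{1,\lambda^3} \leq \epsilon_j$ gives nothing on inner annuli, since nothing prevents $E_j/\epsilon_j \to 0$. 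A priori $\|v_j\|_{1,R}$ can blow up for every $R < \lambda^2$, so your limit $v$ is only defined on a fixed outer annulus, and there the mode-killing step collapses: on an annulus every branch of the Euler ODE --- negative exponents, the $\log r$ solutions, and the oscillatory modes --- is smooth with finite Weiss integral, so neither the Weiss bound nor the frequency bound excludes anything. What is needed is the non-degeneracy/extension estimate of Lemma \ref{Extension} and Corollary \ref{Extension2} (the analogue of Remark 6.2(iii) in \cite{SL}): closeness to the cone at scales $[\lambda^3,1]$ propagates to all scales, quantitatively $\|u_j - A_0 r^{\alpha} h_{1,j}\|_{1,R} \leq C(R)\, \|u_j - A_0 r^{\alpha} h_{1,j}\|_{1,\lambda^2}$ for every $R \in (0,1)$, as in \eqref{SeqRadii} and \eqref{BEstimate}. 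This is proved by a separate contradiction argument combining the Weiss formula with an $L^2$ ring estimate and $\epsilon$-regularity \cite{SU}; it is a substantive step, not an elliptic-estimate formality, and without it the blow-up argument does not start.

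A secondary weakness: your stated mechanism for removing the low modes --- ``the frequency lower bound $N(0,\rho) \geq \alpha$'' --- is heuristic and concerns the nonlinear maps $u_j$; it does not survive the normalization by $E_j$. The paper's mechanism is quantitative: comparing $u_j$ with the degree-$\alpha$ homogeneous extension of its boundary values and expanding the circle Dirichlet energy to second order around the harmonic profile (the first variation vanishes by harmonicity) yields \eqref{Weiss}, i.e.\ $\int_{B_1} \left | \partial_r \left ( u_j/r^{\alpha} \right ) \right |^2 \leq C \|u_j - \phi\|_{1,\lambda^2}^2$, hence after normalization the uniform bound \eqref{GrowthControl} on the full punctured ball; it is the square-integrability of $\partial_r(v/r^{\alpha})$ down to the origin that forces $J_3 = \emptyset$, kills the $\log r$ terms, and leaves only exponents $\geq \alpha$. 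You would need to supply this comparison-map estimate explicitly. Granting these two repairs, the remainder of your plan --- classification and integrability of the kernel as infinitesimal rotations (carried out in Section \ref{Linearization} and Remark \ref{CircleMaps}, where the half-angle issue you flag is handled by lifting through the double cover $\zeta(r,\theta) = (r^2, 2\theta)$ to $\mathbf{C}_\kappa$), cancellation by a rotated profile with quadratic error as in \eqref{Taylor}, and the choice of $\lambda_0$ via the spectral gap $\Gamma$ --- coincides with the paper's proof.
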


Applying Lemma \ref{KeyLemma} repeatedly, we will prove the following result.

\begin{lemma} \label{GeometricEstimate}
There are positive constants $\lambda_0$, $\epsilon_0$, $\eta$, $C$ depending on $\kappa$ and $\Theta$ only such that 
if $\lambda \in \left ( 0 , \lambda_0 \right ]$, $\epsilon \in \left ( 0, \epsilon_0 \right ]$, $h \in C^2 \left (S^1, \mathbf{D}_\kappa \right )$ is a map as in
\eqref{GeneralForm} with $\Theta \in SO(3)$, and
$u \in \mathcal{Q} (h, \epsilon, \lambda )$ satisfies the estimate,
\begin{equation}
\left \| u(r, \theta ) - A_0 r^{1/2\sqrt{\kappa}} h(\theta) \right \|_{1, \lambda^2} < \eta \epsilon, \label{Initial}
\end{equation}
then there exists a $\hat{\Theta} \in SO(3)$ and a corresponding map 
$\hat{h} \in C^2 \left ( S^1, \mathbf{D}_\kappa \right )$ as in \eqref{GeneralForm}, and a positive constant $\mu = \mu ( \lambda )$ such that for every $r \in [0,1]$,
\begin{equation}
\left \| u(r, \theta ) -  A_0 r^{1/2\sqrt{\kappa}} \hat{h}(\theta) \right \|_{C^2 \left ( S^1 \right )} \leq C \eta \epsilon r^{\frac{1}{2 \sqrt{\kappa}}+\mu} \label{CircleEstimate}.
\end{equation}
\end{lemma}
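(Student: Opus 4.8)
The plan is to iterate the decay estimate of Lemma \ref{KeyLemma} across the geometric sequence of scales $r_j=\lambda^j$ and to sum the resulting geometric series, in the spirit of \cite{AA}, \cite{SL} and \cite{HLW}. For $j\geq 0$ I set
$$
u_j(x) = \lambda^{-\frac{j}{2\sqrt{\kappa}}}\, u\!\left(\lambda^{j} x\right),
$$
so that $u_{j+1}(x)=\lambda^{-\frac{1}{2\sqrt{\kappa}}}u_j(\lambda x)$. Each $u_j$ is again a minimizing harmonic map into $\mathbf{D}_\kappa$ with $u_j^{-1}\{0\}=\{0\}$, and since $A(\rho)$ scales by exactly the factor $\lambda^{\frac{1}{2\sqrt{\kappa}}}$ under the dilation $x\mapsto\lambda x$, every $u_j$ shares the same limit $A_0$ as $u$. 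The weights $1,r,r^2$ attached to the zeroth, first and second derivatives in $\|\cdot\|_{\sigma,\rho}$ are chosen precisely so that this norm is invariant under the combined scaling; consequently, for any fixed profile $g$ as in \eqref{GeneralForm},
$$
\big\| u_{j+1} - A_0 r^{\frac{1}{2\sqrt{\kappa}}} g \big\|_{1,\lambda^2}
= \big\| u_j - A_0 r^{\frac{1}{2\sqrt{\kappa}}} g \big\|_{\lambda,\lambda^3}.
$$
This identity is what converts the output annulus $[\lambda^3,\lambda]$ of Lemma \ref{KeyLemma} into the input annulus $[\lambda^2,1]$ at the next scale.

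I would then prove by induction that there are profiles $g_j$ (rotations $\Theta_j$) as in \eqref{GeneralForm}, with $g_0=h$, such that, writing $\mathcal{E}_j := \| u_j - A_0 r^{\frac{1}{2\sqrt{\kappa}}} g_j \|_{1,\lambda^2}$, one has $\mathcal{E}_{j+1}\leq\tfrac12\mathcal{E}_j$ and $A_0\|g_{j+1}-g_j\|_{C^2(S^1)}\leq C\mathcal{E}_j$. Granting that $u_j\in\mathcal{Q}(g_j,\epsilon,\lambda)$, Lemma \ref{KeyLemma} applied to $u_j$ with comparison profile $h_1=g_j$ produces a profile $g_{j+1}$ with $\|u_j - A_0 r^{\frac{1}{2\sqrt{\kappa}}}g_{j+1}\|_{\lambda,\lambda^3}\leq\tfrac12\mathcal{E}_j$; the scaling identity rewrites the left-hand side as $\mathcal{E}_{j+1}$, giving the halving. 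The drift bound follows because on the overlap annulus $\lambda^2\leq r\leq\lambda$ both $g_j$ and $g_{j+1}$ approximate $u_j$, so the weighted $C^2$-norm of $A_0 r^{\frac{1}{2\sqrt{\kappa}}}(g_j-g_{j+1})$ — which, by homogeneity of the profiles, equals $A_0\|g_j-g_{j+1}\|_{C^2(S^1)}$ on every such circle — is at most $\mathcal{E}_j+\tfrac12\mathcal{E}_j$.

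From $\mathcal{E}_j\leq 2^{-j}\mathcal{E}_0$ together with \eqref{Initial}, which gives $\mathcal{E}_0<\eta\epsilon$, I obtain $\sum_j A_0\|g_{j+1}-g_j\|_{C^2}\leq C\sum_j 2^{-j}\mathcal{E}_0\leq 2C\eta\epsilon$. Choosing $\eta$ small makes this total drift less than $\epsilon$, so $A_0\|g_j-h\|_{C^2}<\epsilon$ for every $j$ (the hypothesis of Lemma \ref{KeyLemma} thus persists along the induction), and $(g_j)$ is Cauchy in $C^2(S^1)$. Its limit $\hat{h}$ is again of the form \eqref{GeneralForm}, with rotation $\hat{\Theta}=\lim\Theta_j$ (the family of such profiles is closed under $C^2$-limits because $SO(3)$ is compact), and $A_0\|\hat{h}-g_j\|_{C^2}\leq C 2^{-j}\mathcal{E}_0$. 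Finally I translate the dyadic decay back to $u$: unwinding the definition of $u_j$, the bound $\mathcal{E}_j\leq 2^{-j}\mathcal{E}_0$ controls $\|r^{-\frac{1}{2\sqrt{\kappa}}}u(r,\cdot)-A_0 g_{j+1}\|_{C^2(S^1)}$ on circles of radius $r\sim\lambda^j$; replacing $g_{j+1}$ by $\hat{h}$ at cost $C 2^{-j}\mathcal{E}_0$ and writing $2^{-j}\sim r^{\mu}$ with $\mu=\mu(\lambda):=\frac{\log 2}{\log(1/\lambda)}\in(0,1)$ for $\lambda\leq\lambda_0\leq\tfrac12$, this is exactly \eqref{CircleEstimate}, the intermediate radii being filled in by monotone interpolation between consecutive dyadic scales.

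The main difficulty is the bookkeeping that keeps each rescaled map inside the class $\mathcal{Q}(g_j,\epsilon,\lambda)$: the halving is proved on $[\lambda^2,1]$, whereas membership in $\mathcal{Q}$ requires control on the larger annulus $[\lambda^3,1]$, and it is precisely to bridge this mismatch that the three nested radii $\lambda,\lambda^2,\lambda^3$ appear, through the overlap and scaling relations of the induction. The second, closely related, point is that the profile corrections must be summable. Both summabilities rest on the geometric factor $\tfrac12$ supplied by Lemma \ref{KeyLemma}, and the freedom to choose $\eta$ (equivalently, the initial error) small is what guarantees that the iteration never leaves the neighborhood of $h$ on which Lemma \ref{KeyLemma} is applicable.
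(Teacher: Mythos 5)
Your overall architecture --- rescale by $\lambda^j$, apply Lemma \ref{KeyLemma} at each scale, sum the geometric drift of the profiles, and convert the dyadic decay $2^{-j}$ into the power $r^\mu$ with $\mu = \log 2 / \log (1/\lambda)$ --- is the same as the paper's, and those parts (the scaling identity for the weighted norm, the drift bound $A_0 \| g_{j+1} - g_j \|_{C^2} \leq \tfrac{3}{2}\mathcal{E}_j$ from the overlap annulus, the Cauchy argument and the compactness of $SO(3)$) are correct. The genuine gap is the induction hypothesis $u_j \in \mathcal{Q}(g_j, \epsilon, \lambda)$, which you explicitly flag as the main difficulty but never actually establish. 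Membership in $\mathcal{Q}(g_j, \epsilon, \lambda)$ requires $\| u_j - A_0 r^{1/2\sqrt{\kappa}} g_j \|_{1, \lambda^3} \leq \epsilon$, i.e.\ control down to radius $\lambda^3$, whereas the halving at step $j-1$, after your scaling identity, controls $u_j$ only on $[\lambda^2, 1]$: the missing annulus $[\lambda^3, \lambda^2]$ for $u_j$ corresponds to $[\lambda^4, \lambda^3]$ for $u_{j-1}$, strictly below everything the halving at the previous step reaches. Your claim that the mismatch is bridged ``through the overlap and scaling relations of the induction'' is not a proof --- smallness of $\mathcal{E}_j$ on $[\lambda^2,1]$ carries no information whatsoever about $[\lambda^3, \lambda^2]$. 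The paper closes exactly this gap with a separate ingredient, Lemma \ref{Extension} (in the form of Corollary \ref{Extension2}), whose proof is a contradiction/compactness argument resting on the Weiss monotonicity formula \eqref{Weiss0}: the Weiss energy controls $\int |\partial_r (u / r^{1/2\sqrt{\kappa}})|^2$, hence the $L^2$-oscillation of the rescaled boundary traces across scales, which is upgraded to the weighted $C^2$-norm on the inner annulus by harmonic-map regularity theory.

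Moreover, even once the extension property is invoked, class membership does not persist with the same $\epsilon$, as your induction asserts: the paper obtains only $u_\lambda \in \mathcal{Q}(h_2, C(\lambda)\epsilon, \lambda)$ with $C(\lambda) \to \infty$ as $\lambda \to 0$ (estimate \eqref{Hypo1}), and must then observe separately that the proof of Lemma \ref{KeyLemma} tolerates this degraded constant --- in \eqref{GrowthControl} the bound $C(R)$ deteriorates to $C(R,\lambda)$, but only its finiteness, not its size, enters the compactness argument. Your suggestion that choosing $\eta$ small keeps the iteration inside $\mathcal{Q}(g_j, \epsilon, \lambda)$ conflates two different issues: smallness of $\eta$ controls the accumulated profile drift $A_0 \| g_j - h \|_{C^2} < \epsilon$ (which you handle correctly, and which the paper needs too), but it does not supply the inner-annulus bound. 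Without Lemma \ref{Extension} or an equivalent monotonicity-based substitute, together with the remark that Lemma \ref{KeyLemma} survives a $\lambda$-dependent constant in its hypothesis, your induction cannot be continued past the first step.
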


Finally, using Lemma \ref{GeometricEstimate}, we will prove Theorem \ref{MainTheorem}.

Our plan is as follows: 
In Section \ref{IntegrabilitySection}, we introduce Jacobi fields along harmonic maps, their integrability, and its consequences, as exploited in \cite{AA, SL, HLW}.  
In Section \ref{Linearization}, we derive the Jacobi field equations along harmonic maps $u \, : \, \mathbf{R}^2 \to \mathbf{C}_\kappa$ with $u^{-1} \{ 0 \} = \{ 0 \}$,
and prove a classification result, which is a crucial ingredient in proving Lemma \ref{KeyLemma}.
In Section \ref{ExtensionSection}, we prove the important extension result, Lemma \ref{Extension}, which corresponds to Remark 6.2, (iii) in \cite{SL}. 
In Section \ref{KeySection}, we put together the tools introduced in Sections \ref{IntegrabilitySection}, \ref{Linearization} and \ref{ExtensionSection} to prove Lemma \ref{KeyLemma}.
Finally, in Section \ref{FinalSection}, we prove Lemma \ref{GeometricEstimate} and Theorem \ref{MainTheorem}.

\section{Integrability of Jacobi Field Equation and Consequences} \label{IntegrabilitySection}

We recall some definitions and classical results from \cite{AA, SL, LW}.
For a harmonic map $\phi \in C^2 ( M , N )$, the Jacobi field equation along $\phi$ is defined as: 
\begin{equation}
L_\phi ( \psi ) = \left . \frac{d}{ds} \right \vert_{s=0} \tau \left ( \phi_s \right ) = 0 ,
\label{JacobiDefinition}
\end{equation}
where $\tau \left ( \phi_s \right )$ is the tension field for the $C^1$-family of deformation maps $\phi_s \in C^1 \left ( ( - 1 , 1 ), C^2 (M,N) \right )$ satisfying
$$
\phi_0 = \phi \quad \mathrm{and} \quad \left . \frac{d}{ds} \right \vert_{s=0} \phi_s = \psi.
$$
We say that the Jacobi field equation \eqref{JacobiDefinition} along $\phi$ is integrable, if for any $\psi \in  C^2 \left ( M , \phi^* TN \right )$ solving $L_\phi \psi = 0$,
there exists a $C^1$-family of smooth harmonic maps $\phi_s \in C^1 \left ( ( - 1 , 1 ), C^2 (M,N) \right )$ such that
\begin{equation}
\lim_{s \to 0} \left \| s^{-1} \left ( \phi_s - \phi \right ) - \psi \right \|_{C^2 (M)} = 0. 
\label{JacobiLimit}
\end{equation}

A key observation utilized in \cite{AA, SL} is the following: When $N$ is a real analytic submanifold isometrically embedded in $\mathbf{R}^n$, and 
$\phi \in C^2 \left ( S^{m-1} , N \right )$ is a harmonic map such that
$L_\phi \psi = 0$ is integrable, for $\delta > 0$ suitably small, there is a real-analytic embedding,
$$
\Psi \, : \, \mathrm{ker} L_\phi \cap \left \{ \hat{\psi} \in C^2 \left ( S^{m-1} , \phi^* TN \right ) \, : \, \left \| \hat{\psi} \right \|_{C^2 \left ( S^{m-1} \right )} < \delta \right \}
\to L^2 \left ( S^{m-1} \right ),
$$
and the image of $\Psi$ is a $\mathrm{dim} \left ( \mathrm{ker} \left ( L_\phi \right ) \right)$-dimensional real-analytic manifold $\mathcal{M}$ containing
$$
S_\delta = \left \{ \hat{\phi} \in C^2 \left ( S^{m-1}, N \right ) \, : \, \tau \left ( \hat{\phi} \right ) = 0, \; \left \| \hat{\phi} - \phi \right \|_{C^2 \left ( S^{m-1} \right )} 
< \delta/2 \right \} .
$$

Furthermore, when \eqref{JacobiDefinition} is integrable, $S_\delta$ contains a neighborhood of $\phi$ in $\mathcal{M}$, and consequently,
for $\delta > 0$ sufficiently small, \eqref{JacobiLimit} holds uniformly with $\hat{\phi} \in S_\delta$ in place of $\phi$. More precisely, there is a $\delta_0 > 0$ such that
if $\hat{\phi} \in S_{\delta_0}$ and $L_{\hat{\phi}}\psi = 0$ with $| \psi |_{C^2 \left ( S^{m-1} \right )} < \delta_0$, then there is a $\tilde{\phi} \in S_{2 \delta_0}$ with
\begin{equation}
\tilde{\phi} = \hat{\phi} + \psi + \xi, \quad | \xi |_{C^2 \left ( S^1 \right )} \leq C | \psi |_{C^2 \left ( S^{m-1} \right ) }^2,
\label{Taylor}
\end{equation}
where $C = C ( \phi, N )$.

Finally, we remark that the condition $\tau \left ( \hat{\phi} \right ) = 0$ in the definition of $S_{\delta_0}$
guarantees the constancy of the Dirichlet energy along smooth paths in $S_{\delta_0}$, when $\delta_0$ is sufficiently small.
In other words, for $\delta_0$ sufficiently small, for all $\hat{\phi} \in S_{\delta_0}$, 
\begin{equation}
\int_{S^{m-1}} \left | \frac{d \hat{\phi}}{d \theta} \right |^2 \, \mathrm{d}S =  \int_{S^{m-1}} \left | \frac{d \phi}{d \theta} \right |^2 \, \mathrm{d}S.
\label{EnergyNeighborhood}
\end{equation}

\begin{remark} \label{CircleMaps}
A simple case in which the integrability of the Jacobi field equation holds is that of geodesics, in other words, harmonic maps depending on a single variable.
It is a classical result in differential geometry that for any Jacobi field $J$ along a geodesic $\gamma$, there exists a one-parameter family of geodesics $\Phi_t$
such that $\Phi_0 = \gamma$ and $\left . \frac{d}{dt} \Phi_t \right \vert_{t=0} = J$, cf. \cite[Chapter 5]{doCarmo}.

In particular, for any Jacobi field $J$ along the equator $n(\theta) = \left ( \cos(\theta), \sin(\theta), 0 \right )$, interpreted as a harmonic map from $S^1$ to $S^2$,
we have a family of harmonic maps $\Phi_t$ from $S^1$ to $S^2$ such that $\Phi_0 = n$ and $\left . \frac{d}{dt} \Phi_t \right \vert_{t=0} = J$.
This simple observation, combined with the classification result of Section \ref{Linearization}, will play a crucial role in proving the key geometric decay estimate in Section \ref{KeySection}.
\end{remark}

\section{Euler-Lagrange and Jacobi Field Equations for Maps into $\mathbf{C}_\kappa$} \label{Linearization}

We derive the Jacobi field equations along harmonic maps $u \, : \, \mathbf{R}^2 \to \mathbf{C}_\kappa$ with $u^{-1} \{ 0 \} = \{ 0 \}$,
and prove a classification result, which is a crucial ingredient in proving Lemma \ref{KeyLemma}. 

Away from $0 \in \mathbf{R}^2$, we can define $s = 1/\sqrt{\kappa} |u|$ and $n \, : \, \mathbf{R}^2 \to \mathbf{S}^2$ such that $u = \left ( \sqrt{\kappa-1}s, sn \right )$.
By the regularity theory (cf. \cite{AHL}), $s$ and $n$ are locally smooth in $\mathbf{R}^2 \backslash \{0 \}$. By considering the first variations of the target, 
we obtain the Euler-Lagrange equations satisfied by $s$ and $n$ in $\mathbf{R}^2 \backslash \{0 \}$:
\begin{subequations}
\begin{align}
\kappa \Delta s - s | \nabla n |^2 = 0, \label{eqndegree} \\
s^2 \Delta n + 2 s \nabla s \cdot \nabla n + s^2|\nabla n |^2 n = 0. \label{eqndirector}
\end{align}
\end{subequations}

A straightforward calculation yields that the coupled equations \eqref{eqndegree} and \eqref{eqndirector} can be expressed in terms of $u$ as:
\begin{equation}
\Delta u + N_\kappa \left ( u, \nabla u \right ) u = 0, 
\label{fullLagrange}
\end{equation}
where the matrix $N_\kappa \left ( u, \nabla u \right )$ is given by
\begin{equation}
N_\kappa \left ( u, \nabla u \right ) =
\left |
\nabla \left ( |u|^{-1} u \right )
\right |^2 \mathcal{J}_\kappa
=
\left |
\nabla \left ( |u|^{-1} u \right )
\right |^2
\left (
\begin{matrix}
-1 & 0 \\
0 & \kappa -1 
\end{matrix}
\right ) .
\end{equation}

Linearizing \eqref{fullLagrange} around the solution $u(r,\theta) = r^{1/\sqrt{\kappa}} \left ( \sqrt{\kappa-1}, \left ( e^{i \theta}, 0 \right ) \right )$ of \eqref{fullLagrange},
we obtain the Jacobi field equations for $\psi \, : \, \mathbf{R}^2 \backslash \{0\} \to u^* T \mathbf{C}_\kappa$:
\begin{equation}
\begin{aligned}
\Delta \psi & +  2 \left [ \left \langle \nabla \left ( |u|^{-1} u \right ) \, , \, \nabla \left ( |u|^{-1} \psi \right ) \right \rangle 
+ \left \langle \nabla \left ( |u|^{-1} u \right ) \, , \, \nabla \left ( - |u|^{-3} \left ( u \cdot \psi \right ) u \right ) \right \rangle \right ] 
\mathcal{J}_\kappa u \\
& + \left | \nabla \left ( |u|^{-1} u \right ) \right |^2
\mathcal{J}_\kappa \psi = 0.
\label{JacobiFull}
\end{aligned}
\end{equation}
Letting $n(\theta) = \left ( \cos \left ( \theta \right ), \sin \left ( \theta \right ) ,0 \right )$ and
$\psi = \left ( \psi_1, \psi_2 \right )$, \eqref{JacobiFull} reduces to
\begin{equation}
\begin{aligned}
\frac{1}{r} \partial_r \left ( r \partial_r \psi \right ) +
\frac{1}{r^2} \mathcal{L}_u (\psi)  = 0, \label{JacobiCompact}
\end{aligned}
\end{equation}
where
\begin{equation*}
\mathcal{L}_u (\psi) = \partial_\theta^2 \psi + \frac{2}{\kappa} \left [ \left \langle \partial_\theta n , \partial_\theta \psi_2 \right \rangle 
+ \frac{\left ( \sqrt{\kappa - 1} \psi_1 + n \cdot \psi_2 \right )}{\kappa} \right ] \mathcal{J}_\kappa \left  ( \sqrt{\kappa-1}, n \right ) + \frac{1}{ \kappa} \mathcal{J}_\kappa \psi .
\end{equation*}
Note that the only geometrically relevant solutions $\psi$, that is $\psi \, : \, \mathbf{R}^2 \backslash \{ 0 \} \to u^*T \mathbf{C}_\kappa$, 
are those satisfying the orthogonality relation 
$\psi \cdot \mathcal{J}_{\kappa}u = 0$, which is equivalent to $\psi_1 = \sqrt{\kappa-1} \left ( n \cdot \psi_2 \right )$.
Hence, for such $\psi$, we can rewrite $\mathcal{L}_u (\psi)$ as:
\begin{equation}
\mathcal{L}_u (\psi) = \partial_\theta^2 \psi + \frac{2}{\kappa} \left [ \left \langle \partial_\theta n , \partial_\theta \psi_2 \right \rangle 
+ \left ( n \cdot \psi_2 \right ) \right ] \mathcal{J}_\kappa \left  ( \sqrt{\kappa-1}, n \right ) + \frac{1}{ \kappa} \mathcal{J}_\kappa \psi .
\label{CircleOperator}
\end{equation}

By a standard second variation calculation, the Jacobi operator in \eqref{JacobiFull} is self-adjoint. Since the first term in \eqref{JacobiCompact} is clearly self-adjoint as well,
we conclude that $\mathcal{L}_u$ is a self-adjoint operator on $L^2 \left ( S^1 \right )$. 

For $\psi(r,\theta) = r^{1/\sqrt{\kappa}} \tilde{\psi}(r, \theta)$ solving \eqref{JacobiCompact}, $\tilde{\psi} = \left ( \tilde{\psi_1}, \tilde{\psi_2} \right )$ 
satisfies the following equation on $\mathbf{R}^2 \backslash \{ 0 \}$:
\begin{equation}
 \partial_r^2 \tilde{\psi} + \left ( 1 + \frac{2}{\sqrt{\kappa}} \right ) \frac{1}{r} \partial_r \tilde{\psi} +
+ \frac{1}{r^2} \mathcal{L}'(\tilde{\psi}) = 0,
\label{JacobiFullMod}
\end{equation}
where
\begin{equation}
\begin{aligned}
\mathcal{L}' \left (\tilde{\psi} \right ) = \partial_\theta^2 \tilde{\psi} + \frac{1}{ \kappa} \tilde{\psi} & + 
\frac{2}{\kappa} \left [ n^\perp \cdot \partial_\theta \tilde{\psi}_2 + \left ( n \cdot \tilde{\psi_2} \right ) \right ] \mathcal{J}_\kappa \left ( \sqrt{\kappa - 1}, n \right ) \\
& + \frac{1}{ \kappa} \mathcal{J}_\kappa \left ( \sqrt{\kappa-1} \left ( \tilde{\psi_2} \cdot n \right ) ,  \tilde{\psi_2} \right ),
\label{CircleOperatorMod}
\end{aligned}
\end{equation}
and $n^\perp = \partial_\theta n$. 

An important observation is that $\mathcal{L}' \left ( \tilde{\psi} \right )$ differs from $\mathcal{L} \left ( \tilde{\psi} \right )$ 
by the term $\frac{1}{ \kappa} \tilde{\psi}$ only.
Therefore, $\mathcal{L}'$ is a self-adjoint, elliptic operator on $L^2 \left ( S^1 \right )$ as well. 
In particular, it has eigenvalues $\lambda_1 \leq \lambda_2 \leq ... \lambda_j \leq ...$, where $\lambda_j \to + \infty$, as $j \to + \infty$,
and admits a corresponding orthonormal eigenbasis $\left \{ \tilde{\Psi}_j \right \}$ of $L^2 \left ( S^1, u^* T \mathbf{C}_\kappa \right )$.

Under the orthogonality assumption above, we can find smooth functions $f, g, h$ such that $\tilde{\psi_2} = f n + g n^\perp + h e_3$, where $e_3 = (0,0,1)$. 
By a straightforward calculation, $\mathcal{L}' \left ( \tilde{\psi} \right ) = 0$ reduces to the following system:
\begin{equation}
\begin{aligned}
\kappa f'' - 4f = 2g', \\
\kappa f'' + 4 (\kappa-1)f = 2g', \\ 
g'' + 2f' = 0, \\
h'' + h   = 0.
\label{ReducedSystem}
\end{aligned}
\end{equation}
Since $\kappa > 1$, the first two equations in \eqref{ReducedSystem} imply $f \equiv 0$.
Hence, from $f \equiv 0$ and the first equation, we deduce $g \equiv C$.
Finally, from the last equation, we deduce $h(\theta) = A \cos ( \theta) + B \sin (\theta )$, 
and conclude that $\tilde{\psi} = \left ( 0, C n^\perp + h e_3 \right )$.

Note that the solutions $\tilde{\psi}$ are contained in the set of Jacobi fields along $n$, the equator of $S^2$.
As recalled in Remark \ref{CircleMaps}, the Jacobi fields along any geodesic is integrable.
In other words, for every $\tilde{\psi}$ solving $\mathcal{L}' \left ( \tilde{\psi} \right ) = 0$, there exists a one-parameter family of geodesics, 
$$
\Phi \; : \; \left ( - \epsilon, \epsilon \right ) \times \left [ 0 , 2\pi \right ] \to S^2,
$$
such that $\Phi(0,\theta) = n(\theta)$ and $\left . \frac{d}{dt} \Phi(t,\theta) \right \vert_{t=0} = \tilde{\psi}$.

Consequently, for any $\tilde{\psi}$ solving $\mathcal{L}' \left ( \tilde{\psi} \right ) = 0$ under the above orthogonality relations,
there exists a $C^1$-family of locally smooth harmonic maps $\{ u_t \}$ from $\mathbf{R}^2 \backslash \{ 0 \}$ into $\mathbf{C}_\kappa$,
\begin{equation*}
u_t (r,\theta) = C(\kappa) \left ( \sqrt{\kappa-1} r^{1/\sqrt{\kappa}} , r^{1/\sqrt{\kappa}} \Phi \left ( t, \theta \right ) \right ),
\end{equation*}
such that $ u_0 = u$ and $\left . \frac{d}{dt} u_t \right \vert_{t=0} = r^{1/\sqrt{\kappa}} \tilde{\psi} = \psi$.
In fact, given $\tilde{\psi}(\theta) = C n^\perp(\theta) + A \cos(\theta) + B \sin(\theta)$, we can set
$\Phi(t,\theta) = M_1(Bt) M_2(At) M_3(Ct) n(\theta)$, where
$M_i(\tau)$ is the matrix of counter-clockwise rotation of angle $\tau$ around the $e_i$-axis in $\mathbf{R}^3$ for $i=1,2,3$.

Similarly, we can linearize \eqref{fullLagrange} 
around the solution 
$$u(r,\theta) = r^{1/\sqrt{\kappa}} \left ( \sqrt{\kappa-1}, \left ( e^{- i \theta}, 0 \right ) \right )$$ 
of \eqref{fullLagrange}, and obtain an analogous result by following the above argument step-by-step. Hence, we omit the details.

\section{An Extension Property} \label{ExtensionSection}

We prove the following extension property for $ \mathcal{Q} ( h, \epsilon, \lambda ) $, the set 
of energy-minimizing harmonic maps $u \, : \, B_1(0) \to \mathbf{D}_\kappa$ such that $u^{-1} \{ 0 \} = \{ 0 \}$, and
$$
\left \| u(r, \theta) - A_0 r^{1/2\sqrt{\kappa}} h(\theta) \right \|_{1, \lambda^3} \leq \epsilon,
$$
where $h(\theta) = C_\kappa \left ( \sqrt{\kappa - 1}, \left [ \Theta \left ( e^{\pm i\theta/2},0 \right ) \right ] \right )$.
Our proof is based on a contradiction argument and the Weiss monotonicity formula \eqref{Weiss0}.

\begin{lemma} \label{Extension}
For any $h(\theta) = C_\kappa \left ( \sqrt{\kappa - 1}, \left [ \Theta \left (e^{\pm i \theta/2}, 0 \right ) \right ] \right )$, $\lambda > 0$ and $\mu > 0$,
there is a positive $\hat{\epsilon} = \hat{\epsilon}(\mu, h) \in (0,1)$ such that
\begin{equation}
\mathcal{Q} \left ( h, \hat{\epsilon}, \lambda \right ) \subset \mathcal{Q} \left ( h, 1, \mu \lambda \right ).
\label{Inclusion}
\end{equation}
\end{lemma}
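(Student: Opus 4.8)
The plan is to argue by contradiction, combining the compactness theory for minimizing harmonic maps into $\mathbf{D}_\kappa$ with the rigidity furnished by the Weiss monotonicity formula \eqref{Weiss0}. Since $A_0$ is preserved along the rescalings $v_\rho(x) = \rho^{-1/2\sqrt{\kappa}}u(\rho x)$ and the class is invariant under $u\mapsto cu$ (both the equation and the cone $\mathbf{D}_\kappa$ being scale-invariant), I treat $A_0$ as a fixed positive constant and, rescaling the thresholds, normalize $A_0 = 1$; it suffices to consider $\mu\in(0,1)$, as for $\mu\geq 1$ the statement is immediate from $\|\cdot\|_{1,\mu^3\lambda^3}\leq\|\cdot\|_{1,\lambda^3}$. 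Supposing \eqref{Inclusion} fails for some $h,\lambda,\mu$, I obtain minimizing harmonic maps $u_i : B_1^2(0)\to\mathbf{D}_\kappa$ with $u_i^{-1}\{0\}=\{0\}$, $A_0 = 1$, and $\|u_i - r^{1/2\sqrt{\kappa}}h\|_{1,\lambda^3}\to 0$, yet $\|u_i - r^{1/2\sqrt{\kappa}}h\|_{1,\mu^3\lambda^3} > 1$ for every $i$.

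Next I would extract a limit. The $C^2$-closeness on $\{\lambda^3\leq|x|\leq 1\}$ controls both the boundary data of $u_i$ at $r=1$ and $\int_{\{\lambda^3\leq|x|\leq 1\}}|u_i|^2$; together with the monotonicity of $A(\rho)$ (which bounds $\int_{B_{\lambda^3}}|u_i|^2$ through $A_0 = 1\leq A(\rho)\leq A(1)$) this gives a uniform bound on $\int_{B_1}|u_i|^2$, and comparing the minimizer against a competitor equal to $r^{1/2\sqrt{\kappa}}h$ away from $\partial B_1$ and matching the controlled boundary data of $u_i$ bounds $\int_{B_1}|\nabla u_i|^2$ uniformly. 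By the compactness theory (\cite{L89}, \cite{L91}, \cite{AHL}) a subsequence converges, strongly in energy and in $C^2_{loc}(\overline{B_1}\setminus\{0\})$, to a minimizing harmonic map $v_\infty$ with $A_0(v_\infty) = 1$; passing to the limit, $v_\infty = r^{1/2\sqrt{\kappa}}h$ exactly on $\{\lambda^3\leq|x|\leq 1\}$.

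The rigidity step is the crux. The Weiss density $W(r)$, formed with the homogeneity exponent $1/2\sqrt{\kappa}$, is monotone non-decreasing with $W'(r) = 0$ exactly when $v_\infty$ is homogeneous of degree $1/2\sqrt{\kappa}$. On the annulus $v_\infty = r^{1/2\sqrt{\kappa}}h$ is such a map, so $W$ is constant there, equal to the Weiss density of a degree-$1/2\sqrt{\kappa}$ harmonic map, namely $0$. Since $A_0(v_\infty) = 1$, the blow-up of $v_\infty$ at $0$ is a nontrivial homogeneous map of degree $1/2\sqrt{\kappa}$ of the form \eqref{GeneralForm}, so $W(0^+) = 0$ as well; monotonicity then forces $W\equiv 0$ on $(0,1]$, hence $v_\infty$ is homogeneous of degree $1/2\sqrt{\kappa}$ throughout and $v_\infty\equiv r^{1/2\sqrt{\kappa}}h$. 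Because $\mu^3\lambda^3 > 0$ is bounded away from the origin and $v_\infty$ is nonvanishing on $\{\mu^3\lambda^3\leq|x|\leq 1\}$, the $C^2_{loc}$ convergence is uniform there, so $\|u_i - r^{1/2\sqrt{\kappa}}h\|_{1,\mu^3\lambda^3}\to 0$, contradicting the assumption that it exceeds $1$.

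I expect the main obstacle to be the compactness step: ruling out loss of $L^2$-mass or energy concentration near $0$, so that $v_\infty$ is a genuine nontrivial minimizer with $A_0(v_\infty) = 1$ and $0$ a singular point carrying a degree-$1/2\sqrt{\kappa}$ tangent map --- precisely the input needed for $W(0^+) = 0$ and for the classification \eqref{GeneralForm} to apply. This is where the monotonicity machinery is indispensable: the lower bound $N(0,0^+) = 1/2\sqrt{\kappa}$ together with the monotonicity of $A(\rho)$ confines the mass at small scales and prevents a spurious lower-frequency limit. A secondary, purely technical point is keeping the normalizing constants bounded away from $0$ and $\infty$ under rescaling, which follows from the monotonicity of $A(\rho)$ and the $C^2$-closeness at $r = 1$.
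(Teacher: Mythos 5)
Your overall architecture --- contradiction, compactness of the minimizers $u_i$, identification of the limit on the annulus, rigidity of the limit via the Weiss density --- is a viable route, but the crux of your rigidity step contains a genuine gap. You assert that since $v_\infty = r^{1/2\sqrt{\kappa}}h$ on $\left \{ \lambda^3 \leq |x| \leq 1 \right \}$, the Weiss density $W(r)$ is ``constant there, equal to the Weiss density of a degree-$1/2\sqrt{\kappa}$ harmonic map, namely $0$.'' But $W(r)$ is a bulk quantity: it integrates $|\nabla v_\infty|^2$ over all of $B_r$, including the inner ball $B_{\lambda^3}$, where at that stage you know nothing. Homogeneity of $v_\infty$ on the annulus gives, via \eqref{WeissDiff}, only $W'(r) = 0$ there, i.e.\ \emph{constancy}, not the \emph{value} $0$. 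With only $W(0^+) = 0$ and $W$ constant on $[\lambda^3, 1]$, monotonicity is perfectly consistent with $W$ increasing on $(0, \lambda^3)$ to a positive constant; your conclusion $W \equiv 0$, hence homogeneity on all of $B_1$, does not follow. The missing input is minimality at the outer scale: comparing $v_\infty$ with the homogeneous extension of its own boundary values (which is exactly $r^{1/2\sqrt{\kappa}}h$, since $v_\infty = h$ on $\partial B_1$) gives $\int_{B_1} |\nabla v_\infty|^2 \, \mathrm{d}x \leq \int_{B_1} \left | \nabla \left ( r^{1/2\sqrt{\kappa}} h \right ) \right |^2 \mathrm{d}x$, hence $W(1) \leq 0$; combined with $W(0^+) = 0$ and monotonicity this forces $W \equiv 0$ and closes your argument. (Alternatively, unique continuation for harmonic maps into the analytic cone, applied on the connected set $B_1 \backslash v_\infty^{-1}\{0\}$, yields $v_\infty \equiv r^{1/2\sqrt{\kappa}}h$ directly, without Weiss.) Note that this energy comparison is precisely where the paper injects minimality in deriving its key estimate \eqref{Weiss}; in your write-up minimality enters only through the a priori energy bound in the compactness step, which is why the rigidity step comes out unsupported.

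Two further remarks. First, even after the fix your route genuinely differs from the paper's: the paper never extracts a limit map. It integrates \eqref{WeissDiff}, uses the minimality comparison together with the bound $A_0^2 \leq \left [ \int_{S^1} |h|^2 \right ]^{-1} \int_{S^1} |u_i|^2(1,\cdot)$ to obtain the quantitative estimate \eqref{Weiss}, deduces $\left \| u_i(\tau) - u_i(1) \right \|_{L^2(S^1)} \leq |\log \tau|^{1/2} \epsilon_i$ uniformly for $\tau \geq \mu/4$ as in \eqref{RingEstimateP}, and upgrades smallness in $L^2$ to the weighted $C^2$-norm by the regularity theory of \cite{SU}; this bypasses the delicate points you correctly flag (no $L^2$-mass loss, $v_\infty(0) = 0$, $A_0$ of the limit). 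Second, your normalization $A_0 = 1$ is not free of charge: along a contradiction sequence $A_0^{(i)}$ depends on $u_i$, and dividing by it rescales \emph{both} thresholds, $\epsilon_i \mapsto \epsilon_i / A_0^{(i)}$ and $1 \mapsto 1/A_0^{(i)}$, so smallness of $\epsilon_i$ and the failure condition need not survive if $A_0^{(i)} \to 0$ or $\infty$; your proposed remedy via $C^2$-closeness at $r = 1$ is circular, since that closeness bounds $\| u_i(1,\cdot) \|$ only in terms of $A_0^{(i)}$ itself. The paper's intermediate estimates avoid this because they are linear in the pair $\left ( u_i, A_0 \phi \right )$, though admittedly the paper is also somewhat informal on this point.
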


\begin{proof}
If $u \in \mathcal{Q} \left ( h, \hat{\epsilon}, \lambda \right )$, then
$$
v(x) = \left ( 8 \lambda^3 \right )^{-1/2\sqrt{\kappa}} u \left ( 8 \lambda^3 x \right )
\in \mathcal{Q} \left ( h, \hat{\epsilon}, \frac{1}{2} \right ).
$$
Hence, verifying $v \in \mathcal{Q} \left ( h, 1, \frac{\mu}{2} \right )$ is equivalent
to verifying $u \in \mathcal{Q} \left ( h, 1, \mu \lambda \right )$. Thus, we may assume $\lambda = \frac{1}{2}$. 

Suppose \eqref{Inclusion} does not hold.
Then there exists an $h$ defined as in the hypothesis, $\mu \in (0,1)$, and energy-minimizing harmonic maps $u_i \, : \, B_2(0) \to \mathbf{D}_\kappa$,
and $\epsilon_i \downarrow 0$ such that
\begin{equation}
u_i \in \mathcal{Q} \left ( h, \epsilon_i, \frac{1}{2} \right ) \backslash \mathcal{Q}  \left ( h, 1, \frac{\mu}{2} \right ). \label{ContraHyp}
\end{equation}

We recall that considering the appropriate domain and target variations for $u_i$, (cf. \cite[Lemma 2.1]{AHL}), 
we get the respective monotonicity formulas:
\begin{equation}
\frac{d}{dr} \left ( \int_{B_r(0)} \left | \nabla u_i \right |^2 \, \mathrm{d}x \right ) = 2 \int_{\partial B_r(0)} \left | \frac{\partial u_i}{\partial r} \right |^2 \, \mathrm{d}S,
\label{Stationarity}
\end{equation}
\begin{equation}
\frac{d}{dr} \left ( \int_{\partial B_r(0)} \left | u_i \right |^2 \, \mathrm{d}S \right ) =
\frac{1}{r} \int_{\partial B_r(0)} \left | u_i \right |^2 \, \mathrm{d}S + 2 \int_{B_r(0)} \left | \nabla u_i \right |^2 \, \mathrm{d}x,
\label{Subharmonicity}
\end{equation}
for almost every $r \in (0,2)$. 
Using these identities, integrating by parts, and expressing the three resulting terms in the integrand as a square, 
we arrive at the well-known Weiss monotonicity formula,
\begin{equation}
\begin{aligned}
\frac{d}{dr} \left [ \frac{1}{r^{1/\sqrt{\kappa}}} \int_{B_r(0)} \left | \nabla u_i \right |^2 \, \mathrm{d}x 
- \frac{1/ 2 \sqrt{\kappa}}{r^{1+ 1/\sqrt{\kappa}}} \int_{\partial B_r(0)} \left | u_i \right |^2 \, \mathrm{d}S \right ] = \\
2 \int_{\partial B_r(0)} \left | \frac{\partial \left ( u_i / r^{1/2\sqrt{\kappa}} \right ) }{\partial r} \right |^2 \, \mathrm{d}S,  
\end{aligned}
\label{WeissDiff}
\end{equation}
for almost every $r \in (0,2)$. Note that the right-hand side is $0$, if and only if $u_i$ is homogeneous of degree $1/2\sqrt{\kappa}$.

Integrating the Weiss monotonicity formula on $[0,1]$ gives
\begin{equation}
\begin{aligned}
2 \int_{B_1(0)} \left | \frac{\partial \left ( u_i/ r^{1/2\sqrt{\kappa}} \right )}{\partial r} \right |^2 \, \mathrm{d}x =
   \int_{B_1(0)} \left | \nabla u_i \right |^2 - \frac{1}{2 \sqrt{\kappa}} \int_{\partial B_1(0)} \left | u_i \right |^2 \, \mathrm{d}S.
\label{Weiss0}
\end{aligned}
\end{equation}
For $\phi(r,\theta) = r^{1/2\sqrt{\kappa}} h(\theta)$, by the homogeneity of $\phi$, we have:
\begin{equation*}
\begin{aligned}
2 \int_{B_1(0)} \left | \frac{\partial \left ( u_i/ r^{1/2\sqrt{\kappa}} \right )}{\partial r} \right |^2 \, \mathrm{d}x =
 \int_{B_1(0)} \left | \nabla u_i \right |^2 \, \mathrm{d}x - \frac{1}{2 \sqrt{\kappa}} \int_{\partial B_1(0)} \left | u_i \right |^2 \, \mathrm{d}S - \\
 A_0^2 \left [ \int_{B_1(0)} \left | \nabla \phi \right |^2 \mathrm{d}x - \frac{1}{2 \sqrt{\kappa}} \int_{\partial B_1(0)} \left | \phi \right |^2 \, \mathrm{d}S \right ] .
\end{aligned}
\end{equation*}
By the minimality of $u_i$,
$
\int_{B_1(0)} \left | \nabla u_i \right |^2 \, \mathrm{d}x
\leq \int_{B_1(0)} \left | \nabla \left ( |x|^{1/2\sqrt{\kappa}} u_i \left ( |x|^{-1} x \right ) \right ) \right |^2 \mathrm{d}x.
$
Consequently,
\begin{equation*}
\begin{aligned}
2 \int_{B_1(0)} \left | \frac{\partial \left ( u_i/ r^{1/2\sqrt{\kappa}} \right )}{\partial r} \right |^2 \, \mathrm{d}x & \leq
 \sqrt{\kappa} \left [ \int_{S^1} \left | \partial_\theta u_i \right |^2(1,\theta) \, \mathrm{d} \theta - A_0^2 \int_{S^1} \left | \partial_\theta \phi \right |^2(1,\theta) \, \mathrm{d} \theta \right ]
 \\ & + \frac{1}{4\sqrt{\kappa}} \left  [ A_0^2 \int_{S^1} \left [ h \right |^2(\theta) \, \mathrm{d} \theta - \int_{S^1} \left |u_i \right |^2(1,\theta) \, \mathrm{d} \theta \right ] \\
 & \leq \sqrt{\kappa} \left [ \int_{S^1} \left | \partial_\theta u_i \right |^2(1,\theta) \, \mathrm{d} \theta - A_0^2 \int_{S^1} \left | \partial_\theta \phi \right |^2(1,\theta) \, \mathrm{d} \theta \right ],
\end{aligned}
\end{equation*}
since by Remark \ref{Monotonicity}, the normalization $ \int_{B_1(0)} |\phi|^2 \, \mathrm{d}x = 1$ and \eqref{AChoice},  
\begin{equation*}
\begin{aligned}
A_0^2 & = \int_{B_1(0)} \left | u_i \right |^2 \, \mathrm{d}x \\
& = \int_0^1 \left ( \frac{1}{r^{1+1/ \sqrt{\kappa}}} \int_{\partial B_r(0)} \left | u_i \right |^2 \, \mathrm{d}S \right ) r^{1+1/\sqrt{\kappa}} \, \mathrm{d}r \\
& \leq \left [ \int_0^1 r^{1+1/ \sqrt{\kappa}} \, \mathrm{d}r \right ] \int_0^{2\pi} |u_i |^2 \, \mathrm{d} \theta
= \left [ \int_0^{2\pi} |h|^2 \, \mathrm{d}\theta \right ]^{-1} \int_0^{2\pi} |u_i |^2 \, \mathrm{d} \theta.
\end{aligned}
\end{equation*}
We also claim that for $i$ large enough,
\begin{equation*}
\int_{S^1} \left | \partial_\theta u_i \right |^2(1,\theta) \, \mathrm{d} \theta - A_0^2 \int_{S^1} \left | \partial_\theta \phi \right |^2(1,\theta) \, \mathrm{d} \theta
\leq \left \| u_i(1,\theta) - A_0 \phi(1,\theta) \right \|_{C^2 \left ( S^1 \right )}^2.
\end{equation*}
Note that $h(\theta) = \left ( \sqrt{\kappa-1}, h_2 \right )$, where $h_2$ is a harmonic map from $S^1$ to $\mathbf{RP}^2$, and the Dirichlet energy on $S^1$ is uniformly
$C^2$ in a $C^2$-neighborhood of constrained maps near $h_2$. We expand the Dirichlet energy at $h_2$, and observe that the first order term vanishes, due to the harmonicity of $h_2$.
Therefore, the claim holds.

Hence, we obtain the key estimate,
\begin{equation}
\int_{B_1(0)} \left | \frac{\partial \left ( u_i/ r^{1/2\sqrt{\kappa}} \right )}{\partial r} \right |^2 \, \mathrm{d}x \leq C \left \| u_i - \phi \right \|_{1, 2^{-3}}^2.
\label{Weiss}
\end{equation}

Defining $u_i \left ( \sigma \right ) \left ( \theta \right ) = \sigma^{-1/2\sqrt{\kappa}} u_i \left ( \sigma, \theta \right ) \, : \, S^{1} \to \mathbf{D}_\kappa$,
by the Minkowski inequality, the Cauchy-Schwarz inequality, and \eqref{Weiss}, we have:
\begin{equation}
\begin{aligned}
\left \| u_i ( \tau ) - u_i (1) \right \|_{L^2 \left ( S^1 \right ) }
& \leq \int_\tau^1 \left \| \frac{\partial \left ( u_i / r^{1/2\sqrt{\kappa}} \right )}{\partial r} \right \|_{L^2 \left ( S^1 \right ) } \, \mathrm{d}r \\
& \leq  \left ( \int_\tau^1 \frac{1}{r} \, \mathrm{d}r \right )^{1/2} 
\left ( \int_{B_1(0)} \left | \frac{\partial \left ( u_i / r^{1/2\sqrt{\kappa}} \right ) }{\partial r} \right |^2 \, \mathrm{d}x \right )^{1/2} \\
& \leq \left | \log \tau \right |^{1/2} \left \| u_i - A_0 \phi \right \|_{1, 2^{-3}} \leq \left | \log \tau  \right |^{1/2} \epsilon_i.
\end{aligned}
\label{RingEstimateP}
\end{equation}

Hence, for any $\eta > 0$, choosing $i$ sufficiently large so that $$\max \left \{ \epsilon_i, \epsilon_i \left | \log \left ( \mu / 4 \right ) \right |^{1/2}  \right \} \leq \eta/4,$$
by the triangle inequality, \eqref{RingEstimateP} and \eqref{ContraHyp}, we obtain:
\begin{equation}
\begin{aligned}
\left \| u_i( \tau ) - A_0 \phi \right \|_{L^2 \left ( S^1 \right ) } & \leq 
\left \| u_i \left ( 1 \right ) - A_0 \phi \right \|_{L^2 \left ( S^1 \right ) } +
\left \| u_i ( \tau  ) - u_i \left ( 1 \right )  \right \|_{L^2 \left ( S^1 \right ) } \\ & \leq
\left ( 1 + \left | \log \left ( \mu / 4 \right ) \right |^{1/2} \right ) \epsilon_i \leq \frac{\eta}{2},
\end{aligned}
\label{RingEstimate}
\end{equation}
for all $\mu/4 \leq \tau \leq 1$.

Note that as $\mu$ is fixed, we work uniformly away from $0 \in \mathbf{R}^2$. Recalling the definition 
$u_i \left ( \tau \right ) \left ( \theta \right ) = \tau^{-1/2\sqrt{\kappa}} u_i \left ( \tau, \theta \right ) $,
we can adapt the classical regularity theory for harmonic maps (cf. \cite{SU}) in order to conclude that given $\eta$ small enough, \eqref{RingEstimate} implies
\begin{equation*}
\left \| u_i - A_0 \phi \right \|_{1, \mu/2} \leq 1,
\end{equation*}
which contradicts \eqref{ContraHyp}.
\end{proof}

Strictly speaking, instead of Lemma \ref{Extension}, we will apply an immediate corollary of it in proving Lemma \ref{KeyLemma}.
In order to state this corollary, firstly we define $\mathcal{Q}' (h, \epsilon, \lambda )$ 
as the set of maps $\bar{u} = u \circ \zeta$, where $u \in \mathcal{Q} (h, \epsilon, \lambda )$ and $\zeta(r, \theta) = \left ( r^2, 2 \theta \right )$.
We remark that if $\overline{u} \in \mathcal{Q}' (h, \epsilon, \lambda )$ and $\overline{h} = h \circ \zeta$, then the following estimate holds:
\begin{equation*}
\left \| \overline{u} ( \rho, \omega ) - A_0 \rho^{1/\sqrt{\kappa}} \overline{h}( \omega) \right \|_{1, \lambda^{3}} \leq 4 \epsilon,
\end{equation*}
once we modify the definition of the norm $\| w \|_{\sigma,\rho}$ by defining $w_r(x) = r^{-1/\sqrt{\kappa}} w(x)$, as well as
\begin{equation}
\| w \|_{\sigma,\rho} = \sup_{\rho^2 \leq r \leq \sigma^2} \left ( \left  \| w_r \right \|_{L^\infty\left ( \partial B_r(0) \right )}  
+  r \left \| \nabla w_r \right \|_{L^\infty\left ( \partial B_r(0) \right )} 
+  r^2 \left \| \nabla^2 w_r \right \|_{L^\infty\left ( \partial B_r(0) \right )} \right ).
\label{ModifiedNorm}
\end{equation}

\begin{corollary} \label{Extension2}
For any $h(\theta) = C_\kappa \left ( \sqrt{\kappa - 1}, \left [ \Theta \left (e^{\pm i \theta/2}, 0 \right ) \right ] \right )$, $\lambda > 0$ ,
and $\mu > 0$, there is a positive $\hat{\epsilon} = \hat{\epsilon}(\mu, h) \in (0,1)$ such that
\begin{equation}
\mathcal{Q}' \left ( h, \hat{\epsilon} , \lambda \right ) \subset \mathcal{Q} ' \left ( h, 1, \mu \lambda \right ).
\label{Inclusion2}
\end{equation}
\end{corollary}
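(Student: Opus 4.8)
The plan is to deduce the corollary directly from Lemma \ref{Extension}, transporting the inclusion \eqref{Inclusion} through the fixed reparametrization $\zeta(r,\theta) = (r^2, 2\theta)$. The essential observation is that, by construction, $\mathcal{Q}'(h,\epsilon,\lambda)$ is nothing but the image of $\mathcal{Q}(h,\epsilon,\lambda)$ under the operation $u \mapsto u \circ \zeta$; since $\zeta$ (the conformal map $z \mapsto z^2$, which is why it preserves harmonicity and is the natural choice for unwinding the half-integer angular behavior in \eqref{GeneralForm}) does not depend on $\epsilon$ or $\lambda$, set containment is preserved under this operation. So the inclusion I want is a formal consequence of the corresponding inclusion for $\mathcal{Q}$.

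First I would record the set-theoretic setup. Since $\zeta$ maps $B_1^2(0)$ onto $B_1^2(0)$, the composition $\bar u = u \circ \zeta$ is well defined as a map $B_1^2(0) \to \mathbf{D}_\kappa$, and surjectivity of $\zeta$ makes $u \mapsto u \circ \zeta$ injective; hence each $\bar u \in \mathcal{Q}'(h,\epsilon,\lambda)$ arises from a unique $u \in \mathcal{Q}(h,\epsilon,\lambda)$. (Injectivity is not strictly needed for the containment, but it clarifies that the preimage is unambiguous.)

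Next, given $\lambda > 0$ and $\mu > 0$, I would invoke Lemma \ref{Extension} with exactly these parameters to obtain $\hat{\epsilon} = \hat{\epsilon}(\mu,h) \in (0,1)$ for which \eqref{Inclusion} holds, namely $\mathcal{Q}(h,\hat{\epsilon},\lambda) \subset \mathcal{Q}(h,1,\mu\lambda)$, and I would use this same $\hat{\epsilon}$ in the statement of the corollary. The transport step then reads: for any $\bar u \in \mathcal{Q}'(h,\hat{\epsilon},\lambda)$, write $\bar u = u \circ \zeta$ with $u \in \mathcal{Q}(h,\hat{\epsilon},\lambda)$; by Lemma \ref{Extension} we have $u \in \mathcal{Q}(h,1,\mu\lambda)$, and therefore $\bar u = u \circ \zeta \in \mathcal{Q}'(h,1,\mu\lambda)$ by the very definition of $\mathcal{Q}'$. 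This establishes \eqref{Inclusion2}.

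Because the argument is purely formal once Lemma \ref{Extension} is in hand, there is no genuine analytic obstacle here. The only points requiring care are bookkeeping ones: that $\zeta$ sends $B_1^2(0)$ into itself so that $u \circ \zeta$ lives on the correct domain, and that the parameters $(h,\lambda,\mu,\hat{\epsilon})$ are carried through unchanged so that the $\zeta$-image of each $\mathcal{Q}$-inclusion matches the corresponding $\mathcal{Q}'$-inclusion verbatim. The modified norm \eqref{ModifiedNorm} plays no role in the containment itself; via the preceding remark it serves only to verify that elements of $\mathcal{Q}'$ satisfy the expected $4\epsilon$ estimate after the angle-doubling, which is what makes the corollary usable in the proof of Lemma \ref{KeyLemma}.
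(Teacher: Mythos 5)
Your proof is correct and matches the paper's intent exactly: the paper gives no separate argument for Corollary \ref{Extension2}, calling it an ``immediate corollary'' of Lemma \ref{Extension}, and the implicit reasoning is precisely your transport argument, namely that $\mathcal{Q}'(h,\epsilon,\lambda)$ is by definition the image of $\mathcal{Q}(h,\epsilon,\lambda)$ under $u \mapsto u \circ \zeta$, so the inclusion \eqref{Inclusion} maps verbatim to \eqref{Inclusion2}. Your side remarks (injectivity, the role of the modified norm \eqref{ModifiedNorm} being only to make the corollary usable later) are accurate but inessential, as you yourself note.
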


\section{Proof of Lemma \ref{KeyLemma}} \label{KeySection}

For $\overline{u} = u \circ \zeta$ and $\overline{h}_\ell = h_\ell \circ \zeta$, $\ell=1,2$, \eqref{Halving} is equivalent to
\begin{equation}
\left \| \overline{u}(r,\theta) - A_0 r^{1/\sqrt{\kappa}} \overline{h}_2(\theta) \right \|_{\lambda, \lambda^3} 
\leq \frac{1}{2} \left \| \overline{u} ( r, \theta ) - A_0 r^{1/\sqrt{\kappa}} \overline{h}_1(\theta) \right \|_{1, \lambda^2},
\label{Halving2}
\end{equation}
when the definition of $\| w \|_{\sigma,\rho}$ is modified as in \eqref{ModifiedNorm}. Hence, proving \eqref{Halving2} suffices.
By choosing an appropriate coordinate system for our target, we can also assume that
$$
\phi(r,\theta) = A_0 r^{1/2\sqrt{\kappa}} h(\theta) = A_0 r^{1/2\sqrt{\kappa}} \left [ \left ( e^{\pm i\theta/2} , 0 \right ) \right ].
$$
Furthermore, we restrict our attention to the case $h(\theta) = \left [ \left ( e^{ i\theta/2} , 0 \right ) \right ]$,
as the case $h(\theta) = \left [ \left ( e^{ - i\theta/2} , 0 \right ) \right ]$ proceeds veribatim.

Suppose there exists a $\lambda \leq \lambda_0$, for which \eqref{Halving2} fails to hold.
Then there exist $\epsilon_i \downarrow 0$ and $h_i \in C^2 \left ( S^1, \mathbf{D}_\kappa \right )$ such  that 
$h_i = \left ( \sqrt{\kappa-1},  n_i(\theta) \right )$, where $n_i(\theta) = \left [ \Theta_i \left ( e^{i \theta/2} , 0 \right ) \right ]$,
and $ A_0 \left \| h_i - h \right \|_{C^2 \left ( S^1 \right )} \leq \epsilon_i$,
and energy-minimizing harmonic maps $\left \{ u_i \right \} \subset \mathcal{Q} \left ( h , \epsilon_i , \lambda \right )$, such that
\begin{equation}
\begin{aligned}
\inf \left \{ \left \| \overline{u}_i(r, \theta) - A_0 r^{1/\sqrt{\kappa}} \overline{\chi}( \theta) \right \|_{\lambda, \lambda^3} \, : \, \chi \in C^2 \left ( S^1, \mathbf{D}_{\kappa} \right ), \; 
\chi = \left [ \hat{\Theta}  \left ( e^{i\theta/2}, 0 \right ) \right ] \right \} \\ 
> \frac{1}{2} \left \| \overline{u}_i(r,\theta) - A_0 r^{1/\sqrt{\kappa}} \overline{h}_i( \theta) \right \|_{1, \lambda^2}.
\label{Negation}
\end{aligned}
\end{equation}

We observe that $\overline{h}$ and $\overline{h}_i$ can be lifted to a map into $\mathbf{C}_\kappa$ given by:
$$ 
\left (\sqrt{\kappa-1}, \overline{n}_i(\theta)  \right ) = \left ( \sqrt{\kappa-1}, \Theta_i \left ( e^{\pm i\theta}, 0 \right ) \right ).
$$
Likewise, since all tangent maps of $\overline{u}_i$ at $0$ can also be lifted to a map into $\mathbf{C}_\kappa$, $\overline{u}_i$ 
themselves can be lifted to maps into $\mathbf{C}_\kappa$.
We remark that the target $\mathbf{C}_\kappa$ is isometrically embedded in $\mathbf{R}^4$ by the inclusion map. 
Therefore, the lifting operations allow us to explicitly carry out extrinsic calculations with ease.
For the remainder of the proof, we relabel the lifted maps $\overline{h}, \overline{n}, \overline{h}_i, \overline{n}_i, \overline{u}_i$  into $\mathbf{C}_\kappa$ 
as $h$, $n$, $h_i$, $n_i$ and $u_i$ respectively, and use the modified norm \eqref{ModifiedNorm} in the corresponding estimates. 

Note that $\left \{ u_i \right \} \subset \mathcal{Q}' \left ( h , \epsilon_i , \lambda \right )$ implies that the left-hand side in \eqref{Negation} is less than $4 \epsilon_i$.
Hence, by invoking an appropriately scaled form of Corollary \ref{Extension2}, we can find a sequence $R_i \downarrow 0$ such that
\begin{equation}
\lim_{i \to 0} \left \| u_i(r,\theta) - A_0 r^{1/\sqrt{\kappa}} h_i(\theta) \right \|_{1, R_i} = 0.
\label{SeqRadii}
\end{equation}

By the Weiss-type estimate \eqref{Weiss}, we have:
\begin{equation}
\int_{B_1(0)} \left | \frac{\partial \left ( u_i/ r^{1/\sqrt{\kappa}} \right )}{\partial r} \right |^2 \, \mathrm{d}x
\leq C \left \| u_i - \phi \right \|_{1, \lambda^2}^2.
\label{Weiss2}
\end{equation}
Arguing as in the proof of Lemma \ref{Extension} with \eqref{Weiss2} and \eqref{SeqRadii} in hand, we can conclude that for any $R \in (0,1)$, there exists a $C(R) > 0$
such that
\begin{equation}
\left \| u_i(r,\theta) - A_0 r^{1/\sqrt{\kappa}} h_i(\theta) \right \|_{1, R} 
\leq C(R) \left \| u_i(r,\theta) - A_0 r^{1/\sqrt{\kappa}} h_i(\theta) \right \|_{1, \lambda^2}.
\label{BEstimate}
\end{equation}

We set
$$\beta_i = \left \| u_i(r,\theta) - A_0 r^{1/\sqrt{\kappa}} h_i(\theta) \right \|_{1, \lambda^2},$$ 
$$w_i(r,\theta) = \beta_i^{-1} \left ( u_i(r,\theta) - A_0 r^{1/\sqrt{\kappa}} h_i(\theta) \right ) . $$  
By \eqref{Weiss2} and \eqref{BEstimate}, $w_i$ satisfies the following estimates:
\begin{equation*}
\begin{aligned}
\int_{B_1(0)} \left | \frac{\partial \left ( w_i/ r^{1/\sqrt{\kappa}} \right )}{\partial r} \right |^2 \, \mathrm{d}x \leq C, \\
\left \| w_i \right \|_{1,R} \leq C(R).
\end{aligned}
\end{equation*}
These estimates enable us to assume, by passing to a subsequence if necessary, that $w_i \to w$ in $C^2_{loc} \left ( B_1(0) \backslash \{ 0 \} \right )$, where
\begin{equation}
\begin{aligned}
\int_{B_1(0)} \left | \frac{\partial \left ( w/ r^{1/\sqrt{\kappa}} \right )}{\partial r} \right |^2 \, \mathrm{d}x \leq C, \\
\left \| w \right \|_{1,R} \leq C(R).
\end{aligned}
\label{GrowthControl}
\end{equation}
Furthermore, using $\beta_i \to 0$ and \eqref{SeqRadii}, it is easy to note that $w \in C^2 \left ( B_1(0) \backslash \{ 0 \} \right )$ satisfies the Jacobi field equation $L_\phi w = 0$
for $\phi(r,\theta) = A_0 r^{1/\sqrt{\kappa}} h(\theta)$ in $B_1(0) \backslash \{ 0 \}$, where $h \, : \, S^1 \to \mathbf{C}_\kappa$ is given by 
$h(\theta) = \left ( \sqrt{\kappa-1}, n(\theta) \right ) = \left ( \sqrt{\kappa-1}, \left ( e^{i \theta} , 0 \right ) \right )$.

As in \eqref{JacobiFullMod}, for $w(r,\theta) = r^{1/\sqrt{\kappa}} \tilde{w}(r,\theta)$, $\tilde{w}$ satisfies
\begin{equation}
\partial_r^2 \tilde{w} + \left ( 1 + \frac{2}{\sqrt{\kappa}} \right ) \frac{1}{r} \partial_r \tilde{w} + \frac{1}{r^2} \mathcal{L}' \tilde{w} = 0,
\label{JacobiExp}
\end{equation}
where $\mathcal{L}'$ is the self-adjoint, elliptic operator defined in \eqref{CircleOperatorMod}.  Hence, freezing $r$ and treating $\tilde{w}(r)(\theta)$ as a map defined on $S^1$,
we can expand it with respect to the orthonormal eigenbasis $\left \{ \tilde{\Psi}_j \right \}$ of $\mathcal{L}'$ 
with $r$-dependent coefficients $a_j$:
$$
\tilde{w}(r) = \sum_{j=1}^\infty a_j(r) \tilde{\Psi}_j.
$$
Substituting this sum in \eqref{JacobiExp}, we obtain the following equation for $a_j$, for each $j \geq 1$,
$$
a_j''(r) + \left ( 1 + \frac{2}{\sqrt{\kappa}} \right ) \frac{1}{r} a_j'(r) - \frac{\lambda_j}{r^2} a_j(r) = 0, \quad 0<r<1.
$$
We observe that for each $j$, $a_j(r) = r^{\gamma_j}$, where $\gamma_j \in \mathbf{C}$, is the solution to this equation for
$
\gamma_j = - \frac{1}{ \sqrt{\kappa}} \pm \sqrt{ \frac{1}{ \kappa} + \lambda_j },
$
when $\lambda \neq - \frac{1}{ \sqrt{\kappa}}$. When $\lambda = - \frac{1}{ \sqrt{\kappa}}$, the solution must be of the form $B + C \log r$. 
Therefore,
\begin{equation}
\begin{aligned}
\tilde{w}(r,\theta) & = \sum_{j \in J_1} A_j r^{\gamma_j} \tilde{\Psi}_j(\theta) + \sum_{j \in J_2} \left ( B_j + C_j \log r \right ) \tilde{\Psi}_j(\theta) \\
& + \sum_{j \in J_3} \left ( D_j \cos \left ( \mathrm{Im} \gamma_j \log r  \right ) + 
E_j \sin \left ( \mathrm{Im} \gamma_j \log r  \right ) \right ) r^{-1/\sqrt{\kappa}} \tilde{\Psi}_j(\theta) ,
\end{aligned}
\label{EigenD}
\end{equation}
for $A_j, B_j, C_j, D_j, E_j \in \mathbf{R}$, and
\begin{equation*}
J_1 = \left \{ j \, : \,  \mathrm{Im}  \gamma_j  = 0, \; \mathrm{Re} \gamma_j \neq 0 \right \},  \quad
J_2 = \left \{ j \, : \, \gamma_j = 0 \right \},  \quad
J_3 = \left \{ j \, : \, \mathrm{Im}  \gamma_j  \neq 0 \right \}.
\end{equation*}
Note that a priori, for each $j$ we have two terms in the sum, corresponding to two conjugate values for $\gamma_j$.
However, observing that \eqref{GrowthControl} is precisely:
\begin{equation*}
\int_{B_1(0)} \left | \frac{\partial  \tilde{w} }{\partial r} \right |^2 \, \mathrm{d}x \leq C,
\end{equation*}
we conclude that $J_3 = \emptyset$, $C_j = 0$ for each $j \in J_2$, $J_1 = \left \{ j \, : \, \gamma_j > 0 \right \}$. 
Thus, we can redefine: $\gamma_j = - \frac{1}{ \sqrt{\kappa}} + \sqrt{ \frac{1}{ \kappa} + \lambda_j } $, $J_1 = \left \{ j \, : \, \lambda_j > 0 \right \}$,
$J_2 = \left \{ j \, : \, \lambda_j = 0 \right \}$, and
$$
\tilde{w}(r, \theta) = \tilde{S}(r,\theta) + \tilde{H}(\theta), \quad 0<r<1,
$$
where
$$
\tilde{S}(r,\theta) = \sum_{j \in J_1} r^{\gamma_j} \tilde{\Psi}_j(\theta), \quad \tilde{H}(\theta) = \sum_{j \in J_2} B_j \tilde{\Psi}_j(\theta).
$$

Firstly, we note that for all $j \in J_1$, $\gamma_j \geq \Gamma = - \frac{1}{\sqrt{\kappa}} + \sqrt{\frac{1}{ \kappa} + \Lambda} > 0$, where $\Lambda$ is the
smallest positive eigenvalue of $\mathcal{L}'$. Therefore, for $S(r,\theta) = r^{1/\sqrt{\kappa}} \tilde{S}(r,\theta)$,
\begin{equation}
\left \| S \right \|_{\lambda, \lambda^3} \leq \lambda^\Gamma \left \| S \right \|_{1, \lambda^2} \leq \lambda_0^\Gamma \left \| S \right \|_{1, \lambda^2},
\label{DecayingPart}
\end{equation}
for $\lambda \leq \lambda_0$, where $\lambda_0$ will be chosen at the last step of our argument. 

Secondly, we note that since for each $j \in J_2$, $\gamma_j = 0$ is equivalent to $\lambda_j =0$, we have:
\begin{equation}
\mathcal{L}' \tilde{H} = 0 \quad \mathrm{on} \; S^1.
\label{JacobiObtained}
\end{equation}
Hence, by our classification of geometrically relevant solutions to \eqref{JacobiObtained} in Section \ref{Linearization},
$\tilde{H}$ is a Jacobi field along the harmonic map $n \, : \, S^1 \to S^2$. 
Moreover, 
\begin{equation}
\begin{aligned}
\| \tilde{H} \|_{L^2 \left ( S^1 \right )}^2 = \sum_{j \in J_2} B_j^2 & = 
\sum_{j \in J_2} \left | \int_{S^1} \tilde{\Psi}_j(\theta) \cdot w(1,\theta) \, \mathrm{d} \theta \right |^2
 \\ & \leq \int_{S^1} |w|^2(1,\theta) \leq 2\pi \|w\|_{1,\lambda^2} \leq 2 \pi,
\label{Hsquared}
\end{aligned}
\end{equation}
by the orthonormality of the eigenbasis $ \left \{ \tilde{\Psi}_j \right \}$, the Cauchy-Schwarz inequality, 
$\left \| w_i \right \|_{1, \lambda^2} = 1$ and that $w_i \to w$ in $C_{\mathrm{loc}}^2 \left ( B_1(0) \backslash \{ 0 \} \right )$.

Consequently, by the discussion in Section \ref{IntegrabilitySection} and Remark \ref{CircleMaps}, there are Jacobi fields $\tilde{H}_i$ along harmonic maps $n_i$ defined above, such that
\begin{equation}
\lim_{i \to \infty} \| \tilde{H}_i - \tilde{H} \|_{C^2 \left ( S^1 \right ) } = 0,
\label{GoodFields}
\end{equation}
and as in \eqref{Taylor}, for $i$ sufficiently large, $ \|\beta_i \tilde{H} \|_{L^2 \left ( S^1 \right )} $ is small enough that
there are harmonic maps $\hat{n}_i \in C^2 \left (S^1, S^1 \right )$ such that
\begin{equation}
\hat{n}_i = n_i + \beta_i A_0^{-1} \tilde{H}_i + o \left ( \beta_i \right ).
\label{Taylor2}
\end{equation}
Note that the constant factor $A_0^{-1}$ in front of $\tilde{H}_i$ is determined by a scaling argument, as $w(r,\theta) = r^{1/\sqrt{\kappa}} \tilde{w} (r,\theta)$ is a Jacobi field along
$\phi(r,\theta) = A_0 r^{1/\sqrt{\kappa}} h(\theta)$, where $h(\theta) = \left ( \sqrt{\kappa - 1}, n(\theta) \right )$.

We let $\hat{h}_i(\theta) = \left ( \sqrt{\kappa - 1}, \hat{n}_i (\theta) \right )$, and using \eqref{GoodFields} and \eqref{Taylor2}, we estimate
\begin{equation}
\begin{aligned}
\left \| A_0 r^{1/\sqrt{\kappa}} \hat{h}_i - A_0 r^{1/\sqrt{\kappa}} h_i - \beta_i r^{1/\sqrt{\kappa}} \tilde{H} \right \|_{\lambda, \lambda^3} \\
 \leq \left \| A_0 r^{1/\sqrt{\kappa}} \hat{h}_i - A_0 r^{1/\sqrt{\kappa}} h_i - \beta_ir^{1/2\sqrt{\kappa}}  \tilde{H}_i \right \|_{\lambda, \lambda^3} +
\beta_i \left \| \tilde{H}_i - \tilde{H} \right \|_{C^2 \left ( S^1 \right )} \\
  \leq A_0 \left \| \hat{n}_i - n_i - \beta_i A_0^{-1} \tilde{H}_i \right \|_{C^2 \left ( S^1 \right )} + \beta_i \left \| \tilde{H}_i - \tilde{H} \right \|_{C^2 \left ( S^1 \right )} \\
  \leq \frac{1}{8} \beta_i,
 \end{aligned}
 \label{FirstOrder}
\end{equation}
for $i$ large enough.

Since $w_i \to w$ in $C^2_{\mathrm{loc}} \left ( B_1(0) \backslash \{ 0 \} \right )$ and $w(r,\theta) = r^{1/\sqrt{\kappa}} \left ( \tilde{S}(r,\theta) + \tilde{H} ( \theta ) \right )$,
for $i$ sufficiently large, we have:
\begin{equation}  
\begin{aligned}
\left \|  u_i - A_0 r^{1/\sqrt{\kappa}} h_i - \beta_i w \right \|_{\lambda, \lambda^3}
 = \beta_i  \left \| w_i - w \right \|_{\lambda, \lambda^3} 
 < \frac{1}{8} \beta_i.
\end{aligned}
\label{IntEstimate}
\end{equation}

Using \eqref{GoodFields}, \eqref{FirstOrder}, \eqref{IntEstimate}, for $i$ large enough, we obtain:
\begin{equation}
\begin{aligned}
\left \| u_i - A_0 r^{1/\sqrt{\kappa}} \hat{h}_i \right \|_{\lambda, \lambda^3} \leq
& \left \|  u_i - A_0 r^{1/\sqrt{\kappa}} h_i - \beta_i w \right \|_{\lambda, \lambda^3} + \\
& \left \| A_0 r^{1/\sqrt{\kappa}} h_i + \beta_i \tilde{H}_i - A_0 r^{1/\sqrt{\kappa}} \hat{h}_i \right \|_{\lambda, \lambda^3} + \\
& \beta_i \left \| \tilde{H} - \tilde{H}_i \right \|_{\lambda, \lambda^3} + \left \| \beta_i S \right \|_{\lambda, \lambda^3} \\
& \leq \frac{1}{8} \beta_i +  \frac{1}{8} \beta_i +  \frac{1}{8} \beta_i + \lambda_0^{\Gamma} \left \| \beta_i S \right \|_{1, \lambda^2},
\end{aligned}
\label{Penultimate}
\end{equation}
where the last inequality is due to \eqref{DecayingPart}.

Finally, we can estimate
\begin{equation}
\| S \|_{1, \lambda^2} \leq \| w \|_{1, \lambda^2} + \| H \|_{1, \lambda^2} \leq 1 + \| \tilde{H} \|_{C^2 \left ( S^1 \right )} \leq 1 + C.
\label{FinalEstimate}
\end{equation}
Here both $\| w \|_{1, \lambda^2}$ and $ \| \tilde{H} \|_{L^2 \left ( S^1 \right )}$ are controlled uniformly as in \eqref{Hsquared}, 
and $\tilde{H}$ solves $\mathcal{L}' \tilde{H} = 0$, the geometrically relevant solutions of which have been classified in Section \ref{Linearization}.
Therefore, the absolute bound on  $ \| \tilde{H} \|_{L^2 \left ( S^1 \right )}$ implies an absolute bound on $ \| \tilde{H} \|_{C^2 \left ( S^1 \right )}$.
(We remark that crucially for the applications of Lemma \ref{KeyLemma}, these bounds remain unchanged, when $h$ differs by a rotation.)

Thus, we obtain:
\begin{equation}
\left \| u_i - A_0 r^{1/\sqrt{\kappa}} \hat{h}_i \right \|_{\lambda, \lambda^3} \leq \frac{3}{8} \beta_i + \lambda_0^{\Gamma} (1 + C) \beta_i.
\label{Ultimate}
\end{equation}
Choosing $\lambda_0 = \left ( 8(1+C) \right )^{-1/ \Gamma}$, we conclude that
$$
\left \| u_i - A_0 r^{1/\sqrt{\kappa}} \hat{h}_i \right \|_{\lambda, \lambda^3} \leq \frac{1}{2} \beta_i,
$$
which contradicts \eqref{Negation}.

\section{Proofs of Lemma \ref{GeometricEstimate} and Theorem \ref{MainTheorem}:} \label{FinalSection}

Firstly, we use Lemma \ref{KeyLemma} to prove Lemma \ref{GeometricEstimate}.

\begin{proof}[{Proof of Lemma \ref{GeometricEstimate}}]

Applying Lemma \ref{KeyLemma} with $h_1 = h$, we obtain a map $h_2$ that satisfies \eqref{Halving}.
We denote $\phi_k(r, \theta) = r^{1/2\sqrt{\kappa}} h_k ( \theta )$, for $k \geq 1$.
We observe that \eqref{Halving} and \eqref{Initial} together give
\begin{equation*}
\begin{aligned}
A_0 \left \| h_1 - h_2 \right \|_{C^2 \left ( S^1 \right )} & \leq \left \| u - A_0 \phi_1 \right \|_{\lambda, \lambda^2} + \left \| u - A_0 \phi_2 \right \|_{\lambda, \lambda^2} \\
& \leq \frac{3}{2} \left \| u - A_0 \phi_1 \right \|_{1, \lambda^2} \leq  \frac{3}{2} \eta \epsilon.
\end{aligned}
\end{equation*}
Therefore, for $\eta < \frac{2}{3}$ we have: $A_0 \left \| h_1 - h_2 \right \|_{C^2 \left ( S^1 \right )} \leq \epsilon$. 
Moreover, defining $u_\lambda = \lambda^{-1/2\sqrt{\kappa}} u \left ( \lambda \cdot \right )$, arguing as in Lemma \ref{Extension} after an appropriate scaling, we get:
\begin{equation}
\left \| u_\lambda - A_0  \phi_2 \right \|_{1, \lambda^3} = \left \| u - A_0  \phi_2 \right \|_{\lambda, \lambda^4} < C ( \lambda )\epsilon,
\label{Hypo1}
\end{equation}
where
$
\lim_{\lambda \to 0} C ( \lambda ) = + \infty.
$
Thus, for $\eta < 2/3$, we have:
$$
A_0 \left \| h_1 - h_2 \right \|_{C^2 \left ( S^1 \right )} \leq \epsilon,
$$  
and
$$
u_\lambda \in \mathcal{Q} \left ( h_2, C(\lambda) \epsilon, \lambda \right ).
$$
In particular, we can apply Lemma \ref{KeyLemma} to $u_\lambda$ and $h_2$,
with the slightly modified hypothesis that $u_\lambda \in \mathcal{Q} \left ( h_2, C(\lambda) \epsilon, \lambda \right )$.
We note that the only difference this modification leads to is that in \eqref{GrowthControl}, the second bound $C(R)$ deteriorates to $C(R,\lambda)$,
where
$
\lim_{\lambda \to 0} C (R, \lambda ) = + \infty.
$
However, the finiteness of $C(R,\lambda)$ for each $\lambda > 0$ is sufficient for the rest of the proof of Lemma \ref{KeyLemma}.
Therefore, its conclusion remains unchanged. 
Consequently, we obtain maps $h_3$ and corresponding $\phi_3$ such that
\begin{equation*}
\begin{aligned}
\left \| u - A_0 \phi_3 \right \|_{\lambda^2, \lambda^4} 
= \left \| u_\lambda - A_0 \phi_3 \right \|_{\lambda, \lambda^3} 
< \frac{1}{2} \left \| u_\lambda - A_0 \phi_2 \right \|_{1, \lambda^2} 
& = \frac{1}{2} \left \| u - A_0 \phi_2 \right \|_{\lambda, \lambda^3} \\
& < \frac{1}{4} \left \| u - A_0 \phi_1 \right \|_{1, \lambda^2}.
\end{aligned}
\end{equation*}

Hence, applying Lemma \ref{KeyLemma} to $u_{\lambda^k}$ repeatedly, 
(while modifying its hypothesis as: $u_{\lambda^k} \in \mathcal{Q} \left ( h_k, C(\lambda) \epsilon, \lambda^k \right )$, 
and replacing $h$ with $h_k$ in each step), 
for $k \geq 1$, by induction, we obtain  a sequence of maps $h_k$, $\phi_k$, $k \geq 1$ such that
\begin{equation*}
\left \| h_k - h_{k+1} \right \|_{C^2 \left ( S^1 \right )} \leq  A_0^{-1} \frac{3}{2^k} \eta \epsilon,
\end{equation*}
and
\begin{equation*}
\left \| u - A_0 \phi_{k+1} \right \|_{\lambda^k, \lambda^{k+2}} < \frac{1}{2^k}  \eta \epsilon.
\end{equation*}
Consequently, there exists an $\hat{h} \in C^2 \left (S^1; \mathbf{D}_\kappa \right )$ such that $h_k \to \hat{h}$ in $C^2 \left ( S^1 \right )$ with the convergence rate,
\begin{equation*}
\left \| \hat{h} - h_k \right \|_{C^2 \left ( S^1 \right )} = \lim_{j \to \infty} \left \| h_k - h_j \right \|_{C^2 \left ( S^1 \right )} 
\leq \sum_{\ell=k}^j \left \| h_\ell - h_{\ell+1} \right \|_{C^2 \left ( S^1 \right )}  < 2 A_0^{-1} \frac{3}{2^k} \eta \epsilon. 
\end{equation*}
As a result, we have:
\begin{equation*}
\begin{aligned}
\left \| u - A_0 \hat{\phi} \right \|_{\lambda^k, \lambda^{k+2}} 
\leq \left \| u - A_0 \phi_{k+1} \right \|_{\lambda^k, \lambda^{k+2}} + A_0 \left \| h_{k+1} - \hat{h} \right \|_{C^2 \left ( S^1 \right )}
& \leq \frac{1}{2^k} \eta \epsilon + \frac{3}{2^k} \eta \epsilon \\
& = \frac{16}{2^{k+2}} \eta \epsilon.
\end{aligned}
\end{equation*}
Choosing $\mu = \frac{ \log 2}{\log ( 1 / \lambda ) }$, by the definition of the norm $\| \cdot \|_{a,b}$, we get:
\begin{equation*}
\left \| u(r,\cdot) - A_0 r^{1/2\sqrt{\kappa}} \hat{h} \right \|_{C^2 \left ( S^1 \right )} \leq C \eta \epsilon r^{\frac{1}{2\sqrt{\kappa}}+\mu}.
\end{equation*}
\end{proof}

Now using Lemma \ref{GeometricEstimate}, we finally prove Theorem \ref{MainTheorem}.

\begin{proof}[{Proof of Theorem \ref{MainTheorem}}]

Since $\phi \in C^\infty \left ( \mathbf{R}^2, \mathbf{D}_\kappa \right )$ is a homogeneous blow-up of $u$ at $0$, for any $\epsilon > 0$, $\eta > 0$, $\lambda > 0$,
there exists a sequence $\left \{ \rho_i \right \} \downarrow 0$ and an $i \left ( \epsilon , \eta \right ) > 0$ such that
for every $i \geq i \left ( \epsilon, \eta,  \lambda \right )$,
$$ 
\left | \left \| u \left ( \rho_i \cdot \right )  \right \|_{L^2 \left ( B_1 (0) \right ) }^{-1} u \left ( \rho_i r, \theta \right ) - \phi(r,\theta ) \right | 
< \lambda^{1/\sqrt{\kappa}} \eta \epsilon /6, \quad 0 < r < 1,
$$
which implies
$$
\left | \left ( \rho_i r \right )^{-1/2\sqrt{\kappa}} \left ( u \left ( \rho_i r, \theta \right ) - A \left (0, \rho_i \right ) \phi (\rho_i r, \theta) \right ) \right | < 
A \left (0, \rho_i \right ) \left ( \lambda^2 /r \right )^{1/2\sqrt{\kappa}} \eta \epsilon / 6.
$$
Consequently,
$$
\left | |x|^{-1/2\sqrt{\kappa}} \left ( u(x) - A \left (0, \rho_i \right ) \phi(x) \right ) \right | < A \left (0, \rho_i \right ) \eta \epsilon/3, \quad \rho_i \lambda^2 < |x| < \rho_i. 
$$

Choosing $\rho = \rho_i$, where $i = i \left ( \epsilon, \eta, \lambda, \kappa \right )$ large enough and defining: 
$
u_\rho (x)= \rho^{-1/2\sqrt{\kappa}} u \left ( \rho x \right ),
$
we obtain:
$$
\left | |x|^{-1/2\sqrt{\kappa}} \left ( u_\rho (x) - A ( 0, \rho ) \phi(x) \right ) \right | < A(0,\rho) \eta \epsilon/3, \quad \lambda^2 < |x| < 1.
$$
Note that at this stage $\rho$ depends on $\lambda$, $\epsilon$, $\eta$, and $\kappa$, while $\lambda$, $\eta$ and $\epsilon$ are arbitrary. 

We control the higher-order terms in the definition of $\left \| u_\rho - A (0, \rho) \phi \right \|_{1, \lambda^2}$ by similarly estimating
$\left \| \tau^{-1/2\sqrt{\kappa}} \left [ u_\rho (\tau \cdot ) - A(0,\rho) \phi (\tau \cdot ) \right ] \right \|_{L^2 \left ( S^1 \right )} $ for every $\tau \in \left ( \lambda^2, 1 \right )$.
These estimates can be obtained by the argument in the proof of Lemma \ref{Extension} and the regularity theory of \cite{SU}, 
for a suitably chosen $\lambda \left ( \epsilon, \eta \right ) \in (0,1)$.
Hence, we get:
\begin{equation*}
\left \| u_\rho - A(0,\rho) \phi \right \|_{1, \lambda^2} < A(0,\rho) \eta \epsilon,
\end{equation*}
where $\rho = \rho \left ( \epsilon, \eta, \lambda, \kappa \right ) \in (0,1)$.
Similarly, we can obtain:
\begin{equation*}
\left \| u_\rho - A(0,\rho) \phi \right \|_{1, \lambda^3} < A(0,\rho) \epsilon.
\end{equation*}
by suitably modifying $\lambda \left ( \epsilon, \eta \right ) \in (0,1)$ first, and then shrinking $\rho = \rho \left ( \epsilon, \eta, \lambda, \kappa \right ) \in (0,1)$ further, if necessary.

Note that we can apply Lemma \ref{GeometricEstimate} with $\epsilon$ replaced with $A(0,\rho) \epsilon$, 
since $A(0,\rho)$ is bounded by Remark \ref{Monotonicity} and is equal to $A_0$ in \eqref{AChoice} for $u_\rho$.
Therefore, by Lemma \ref{GeometricEstimate} and scaling, we have for $\tilde{r} \in (0,1)$,
\begin{equation*}
\left \| u (\rho \tilde{r},\theta) - A (0,\rho) \phi( \rho \tilde{r},\theta) \right \|_{C^2 \left ( S^1 \right )} 
\leq C A(0,\rho) \eta \epsilon  (\rho \tilde{r})^{\frac{1}{2\sqrt{\kappa}} + \mu }.
\end{equation*}
Thus, setting $r_0 = \rho$ and $r = \rho \tilde{r}$, we conclude that for every $r \in \left ( 0, r_0 \right )$,
\begin{equation*}
\left | u (r,\theta) - A \left ( 0, r_0 \right ) \phi( r,\theta) \right | \leq C A \left ( 0, r_0 \right ) r^{\frac{1}{2\sqrt{\kappa}} + \mu }.
\end{equation*}
\end{proof}

\section*{Acknowledgment}
I would like to thank my thesis advisor Professor Fang-Hua Lin for suggesting this problem and many helpful discussions.

\bibliography{bibext}
\bibliographystyle{amsplain}

\end{document}